\newtheorem{theorem}{Theorem}
\newtheorem{corollary}{Corollary}[theorem]
\newtheorem{lemma}[theorem]{Lemma}
\theoremstyle{definition}
\newtheorem{definition}{Definition}[section]
\theoremstyle{remark}
\newcounter{wordcnt}
\newcommand{\setwordlist}[1]{%
  \setcounter{wordcnt}{0}
  \renewcommand{\do}[1]{
    \stepcounter{wordcnt}
    \expandafter\def\csname wordlist\thewordcnt\endcsname{##1}
  }%
  \docsvlist{#1}
}
\newcommand{\provided}[1]{%
  \if\relax\detokenize{#1}\relax
    \csname wordlist\fpeval{randint(1,\value{wordcnt})}\endcsname
  \else
    \csname wordlist#1\endcsname
  \fi
}
\begin{document}
\title{2-RADICAL LAPLACE OPERATORS}
\author{Shouvik Datta Choudhury\thanks{shouvikdc8645@gmail.com, shouvik@capsulelabs.in}\\
  \small 109,Kalikapur Road,\\
  \small Kolkata - 700099, India}
\date{\today}
\maketitle
\begin{abstract}
    The paper introduces a new elliptic operator called the 2-radical Laplace operator, which has a positive eigenvalue equal to the positive square root of the eigenvalue of the Laplace operator. The authors provide several theorems that serve as counterparts to those associated with the Laplace operator. We derive Weyl type and Courant type formulas as well as inequalities involving spectral asymptotics and isoperimetric inequalities for the 2-radical Laplace operator in the context of Riemannian manifold, inspired by Gromov's work. These theorems are proved on different perspectives and proof techniques from Chavel's work, potentially contributing to the broader field of mathematical research involving differential geometry and operator theory.The term "square root" operator and 2-radical Laplace operator are used interchangeably
\end{abstract}

\setwordlist{supplied,apportioned,furnished,accoutered,stipulated}
\textbf{MSC2020}: 47Bxx (Operator Theory), 46Cxx (Inner Product Spaces and their Geometry), 28A75 (Length, Area, Volume, Other Geometric Measure Theory)53-xx (Differential Geometry), 58Jxx (Partial Differential Equations on Manifolds), 58J35 (Heat and other parabolic equation methods)
\section{Preliminaries}
In mathematical analysis[6][7][8], a Hilbert space is a complete inner product space. In other words, it is a vector space equipped with an inner product (a function that maps two vectors to a scalar) that satisfies certain properties, and such that the space is complete with respect to the norm induced by the inner product.The space $L^2(M)$ is a specific example of a Hilbert space, where $M$ is a smooth manifold. The space consists of all measurable functions $f$ on $M$ such that the integral of the square of the absolute value of $f$ with respect to the Lebesgue measure on $M$ is finite:
$$\int_M |f(x)|^2 d\mu(x) < \infty,$$
where $\mu$ is the Lebesgue measure on $M$.\\
Intuitively, the space $L^2(M)$ contains functions that are "well-behaved" in a certain sense, as they have finite energy or "size". The inner product on $L^2(M)$ is defined as
$$\langle f, g \rangle = \int_M f(x)g(x) d\mu(x),$$
which measures the "overlap" between two functions $f$ and $g$.\\ The norm induced by this inner product is
$$|f|_{L^2(M)} = \sqrt{\langle f, f \rangle} = \left(\int_M |f(x)|^2 d\mu(x)\right)^{\frac{1}{2}},$$
which measures the "size" or "magnitude" of a function.\\
The space $L^2(M)$ is important in many areas of mathematics, including analysis, geometry, and topology. In particular, it is used to study partial differential equations, harmonic analysis, and functional analysis on manifolds.\\
a measurable function is a function between two measurable spaces that preserves measurable sets. In particular, for a measurable function $f:X \rightarrow Y$, if $E \subseteq Y$ is a measurable set, then its preimage $f^{-1}(E) \subseteq X$ is also measurable.\\
In the context of Lebesgue integration theory, a function $f:X \rightarrow \mathbb{R}$ is said to be measurable if the preimage of any Borel set in $\mathbb{R}$ is measurable. That is, for any Borel set $B \subseteq \mathbb{R}$, the set $f^{-1}(B) = {x \in X : f(x) \in B}$ is measurable.\\
Measurable functions play a central role in integration theory and probability theory, where they are used to define the Lebesgue integral and to model random variables, respectively. In particular, the Lebesgue integral can be defined for any measurable function, and is well-defined and unique up to a set of measure zero.\\
A measurable set on a Riemannian manifold is a set that can be assigned a well-defined volume or measure, similar to how it is done in Euclidean space.
In particular, if $(M,g)$ is a Riemannian manifold, then a set $E\subset M$ is measurable if there exists a Borel set $B\subset \mathbb{R}$ and a diffeomorphism $\varphi: U\to V\subset M$ for some open sets $U\subset \mathbb{R}^n$ and $V\subset M$, such that $\varphi(U\cap E)=V\cap B$. Here, $\mathbb{R}^n$ is the tangent space of $M$ at some fixed point, and $\varphi$ is a chart that maps points in $M$ to points in $\mathbb{R}^n$.
Once we have a notion of measurable sets, we can define the volume or measure of a set on a Riemannian manifold.\\
Let us start by defining what is meant by a "closed eigenvalue problem"[1]. A closed eigenvalue problem is an eigenvalue problem in which the domain is compact and the boundary conditions are homogeneous. Mathematically, such a problem can be written as:\\
$$
\mathcal{L} u(x)=\lambda u(x), \quad x \in \Omega
$$
with homogeneous boundary conditions\\
$$
B u(x)=0, \quad x \in \partial \Omega
$$
where $\mathcal{L}$ is a linear differential operator, $u$ is the unknown function, $\lambda$ is a scalar parameter (the eigenvalue), $\Omega$ is a compact domain in $\mathbb{R}$ with smooth boundary $\partial\Omega$, and $B$ is a linear operator that specifies the homogeneous boundary conditions.\\
Now, let us explain why the solutions to such problems form a complete orthonormal basis for the function space in which they reside. First, note that for a closed eigenvalue problem, there exists a finite number of eigenvalues $\lambda_{1}$, $\lambda_{2}$, $\ldots$, $\lambda_{k}$ (counted with multiplicities) and corresponding eigenfunctions $u_{1}$, $u_{2}$, $\dots$,$u_{k}$ that satisfy the above equation and boundary conditions. These eigenfunctions are also orthogonal in the sense that:
$$
\int_{\Omega} u_i(x) u_j(x) d x=0, \quad i \neq j
$$
where $dx$ is the volume element in $\Omega$.\\
Moreover, the eigenfunctions form a basis for the space of functions that satisfy the same boundary conditions, which we denote by $\mathcal{H}$. That is, any function $f$ in ${\mathcal{H}}$ can be expressed as a linear combination of the eigenfunctions:\\
$$
f(x)=\sum_{i=1}^k c_i u_i(x)
$$
where the coefficients $\mathrm{c}_{i}$ are uniquely determined by $\mathrm{f}$ through the inner product:\\
$$
c_i=\frac{\int_{\Omega} f(x) u_i(x) d x}{\int_{\Omega} u_i(x) u_i(x) d x}
$$
This property is called completeness, and it implies that the eigenfunctions span the function space $\mathcal{H}$. In other words, any function in $\mathcal{H}$ can be approximated arbitrarily well by a finite linear combination of the eigenfunctions.\\
The Dirichlet eigenvalue problem is a type of eigenvalue problem for a second-order linear partial differential equation of the form $Lu = -\lambda u$ with homogeneous Dirichlet boundary conditions. Here, $L$ is a linear differential operator, $\lambda$ is a scalar parameter known as the eigenvalue, and $u$ is a function defined on some domain $\Omega \subset \mathbb{R}^n$.\\
More precisely, for the Dirichlet eigenvalue problem, we seek a function $u$ that satisfies the following conditions:
\begin{align}
Lu &= -\lambda u, &&\text{in } \Omega,\\
u&=0, &&\text{on } \partial\Omega,
\end{align}
where $\partial\Omega$ denotes the boundary of the domain $\Omega$. The Dirichlet boundary condition specifies that $u$ is zero on the boundary of $\Omega$, which is a homogeneous boundary condition.\\
The problem is said to be an eigenvalue problem because we seek not only the function $u$ that satisfies the differential equation and boundary condition, but also the scalar $\lambda$ such that a non-trivial solution $u$ exists. The solutions of the Dirichlet eigenvalue problem are called the Dirichlet eigenfunctions, and the associated eigenvalues are called the Dirichlet eigenvalues.\\
The Dirichlet eigenvalue problem arises in various fields of mathematics and physics, including the study of vibrations of membranes and other physical systems, quantum mechanics, and the theory of elliptic partial differential equations. The eigenfunctions of the Dirichlet eigenvalue problem form a basis for the function space of square-integrable functions on $\Omega$ that satisfy the Dirichlet boundary condition.\\
An unbounded self-adjoint operator is a linear operator on a Hilbert space that satisfies two important properties: it is self-adjoint, and it is not bounded.The property of self-adjointness means that the operator is equal to its adjoint, which is obtained by taking the complex conjugate and transposing the operator. This property ensures that the operator has real eigenvalues and that its eigenvectors form a complete orthonormal basis for the Hilbert space.
The property of being unbounded means that the operator is not limited in its growth, and its domain is not necessarily dense in the Hilbert space. This means that the operator may not be defined on all elements of the Hilbert space, and its range may not be contained in the Hilbert space.\\
The spectral theorem for unbounded self-adjoint operators provides a way to decompose the operator into a sum of its spectral components, using a projection-valued measure[9] associated with the operator. This allows one to define functions of the operator, which can be obtained by integrating the function over the spectrum of the operator with respect to this measure.\\
Formally, an unbounded self-adjoint operator $T$ on a Hilbert space $\mathcal{H}$ is a linear operator $T: D(T) \subset \mathcal{H} \to \mathcal{H}$, where $D(T)$ is a dense subset of $\mathcal{H}$, such that $T$ is equal to its adjoint $T^*$, where $T^: D(T^*) \subset \mathcal{H} \to \mathcal{H}$ is defined by
$$\langle T^*x, y\rangle = \langle x, Ty\rangle$$
for all $x \in D(T^*)$ and $y \in D(T)$, where $\langle \cdot, \cdot \rangle$ denotes the inner product on $\mathcal{H}$. In other words, for any $x, y \in D(T)$, we have\\
$$\langle Tx, y\rangle = \langle x, Ty\rangle.$$
This condition is sometimes written as $T = T^*$.\\
The operator $T$ is unbounded if $D(T)$ is not equal to $\mathcal{H}$, and the graph of $T$ is not closed in $\mathcal{H} \times \mathcal{H}$. In other words, there exist sequences ${x_n}$ in $D(T)$ and ${y_n}$ in $\mathcal{H}$ such that $x_n \to x$ and $y_n \to y$ as $n \to \infty$, but $(x_n, Ty_n)$ does not converge to $(x, Ty)$.\\
The domain $D(T)$ of an unbounded self-adjoint operator is often chosen to be a Sobolev space or a space of functions with appropriate regularity conditions[9]. The spectral theorem for unbounded self-adjoint operators[9] provides a way to decompose the operator into a sum of its spectral components, using a projection-valued measure associated with the operator.\\
The spectral theorem for unbounded self-adjoint operators is a result in functional analysis that states that any unbounded self-adjoint operator on a separable Hilbert space has a spectral decomposition in terms of its eigenvectors, which are elements of the Hilbert space.\\
The spectral measure $E$ is defined on the Borel sets of $\mathbb{R}$ and satisfies $\int_{-\infty}^\infty \lambda^2 d|E(\lambda)| < \infty$, where $|E(\lambda)|$ is the operator norm of the projection $E(\lambda)$ associated with the Borel set ${\lambda}$.
The operator $T$ has the spectral decomposition $T = \int_{-\infty}^\infty \lambda dE(\lambda)$, which is an integral in the weak operator topology.
The eigenvectors ${\phi_\lambda}{\lambda \in \mathbb{R}}$ form an orthonormal basis for $H$ and satisfy $T\phi\lambda = \lambda\phi_\lambda$ for all $\lambda \in \mathbb{R}$.\\
A projection-valued measure on a measurable space $(X,\mathcal{B})$ is a map $E:\mathcal{B}\to L(\mathcal{H})$, where $\mathcal{H}$ is a Hilbert space, such that:\\
$E(\emptyset) = 0$ and $E(X) = I$, where $0$ and $I$ denote the zero and identity operators on $\mathcal{H}$, respectively.
For any sequence of pairwise disjoint sets ${B_n}\subseteq \mathcal{B}$, we have $\sum_n E(B_n) = E(\bigcup_n B_n)$, where the sum is taken in the strong operator topology.\\
For any $B \in \mathcal{B}$, the operator $E(B)$ is a projection, meaning that $E(B)^2 = E(B)$ and $E(B)^* = E(B)$, where $*$ denotes the adjoint operation on operators.\\
For any $B \in \mathcal{B}$, the operator $E(B)$ commutes with any bounded operator $A$ on $\mathcal{H}$.
For any $B \in \mathcal{B}$ and $\psi \in \mathcal{H}$, the function $B \mapsto \langle E(B)\psi,\psi\rangle$ is a measure on $(X,\mathcal{B})$, where $\langle\cdot,\cdot\rangle$ denotes the inner product on $\mathcal{H}$. In other words, $E(B)$ is a measurable operator-valued function on $(X,\mathcal{B})$.\\
In functional analysis, the strong operator topology is a topology on the space of bounded linear operators between two normed spaces. Given two normed spaces $X$ and $Y$, the strong operator topology on the space of bounded linear operators $B(X,Y)$ is defined as follows:
$A\ne (T_i)$ in $B(X,Y)$ converges to a bounded linear operator $T$ in $B(X,Y)$ in the strong operator topology if and only if $T_i(x)$ converges to $T(x)$ in $Y$ for every $x$ in $X$.\\
In other words, the strong operator topology is generated by the family of seminorms
$||T||_x = ||T(x)||_Y$
where $||·||_Y$ is the norm on the range space $Y$, and $x$ ranges over the unit ball of $X$.\\
The strong operator topology is also sometimes called the topology of pointwise convergence, since it is determined by the pointwise convergence of operators on the elements of the domain space $X$. Another interpretation of this topology is that it measures the convergence of the image of the unit ball of  $X$ under the operators $T_i$, which is equivalent to the convergence of the operators themselves.\\
The strong operator topology is a very important topology in functional analysis, and many fundamental results in the subject are formulated in terms of this topology. For example, the Banach-Alaoglu theorem states that the closed unit ball of the dual space of a normed space is compact in the weak* topology, which is a weaker topology than the strong operator topology. Another important result is the uniform boundedness principle, which states that if a family of bounded linear operators is pointwise bounded, then it is uniformly bounded in the strong operator topology.\\
In functional analysis, the projection-valued measure of an unbounded linear operator is a tool used to study the spectral theory of the operator.\\
Let $H$ be a Hilbert space and let $T$ be a densely defined, unbounded, self-adjoint linear operator on $H$. The projection-valued measure associated with $T$ is a mapping $E: \mathcal{B}(\mathbb{R}) \rightarrow \mathcal{L}(H)$, where $\mathcal{B}(\mathbb{R})$ is the Borel $\sigma$-algebra of $\mathbb{R}$ and $\mathcal{L}(H)$ is the space of bounded linear operators on $H$, such that:\\
$E(\varnothing) = 0$ and $E(\mathbb{R}) = I$, where $0$ and $I$ denote the zero and identity operators on $H$, respectively.\\
For any Borel sets $B_1, B_2 \subseteq \mathbb{R}$ with $B_1 \cap B_2 = \varnothing$, we have $E(B_1) E(B_2) = 0$.\\
For any Borel set $B \subseteq \mathbb{R}$, we have $T = \int_{B} \lambda dE(\lambda)$, where the integral is in the sense of strongly convergent operators.
In other words, the projection-valued measure $E$ decomposes the spectrum of $T$ into a family of orthogonal projections on $H$, such that the operator $T$ can be expressed as an integral of its eigenvalues with respect to the projections.\\
The projection-valued measure has several important properties that are useful for studying the spectral properties of $T$. For example, it allows us to define the resolvent set of $T$ as the set of complex numbers $z$ such that the operator $(T-zI)^{-1}$ exists and is bounded. The resolvent set is an open subset of the complex plane that contains the spectrum of $T$ as a closed subset. Moreover, the projection-valued measure allows us to define the spectral measure of $T$, which is a measure on the Borel sets of $\mathbb{R}$ that encodes the distribution of the eigenvalues of $T$.\\
The projection-valued measure is also useful for studying the properties of unbounded, self-adjoint operators that do not have discrete spectra, such as the Laplacian operator on a domain in $\mathbb{R}^n$ or on a Riemannian manifold. In these cases, the spectrum of the operator is a continuous interval or a subset of the positive real line, respectively, and the projection-valued measure allows us to study the distribution of the eigenvalues of the operator and their associated eigenspaces.\\
In conclusion, the projection-valued measure is an important tool in the study of the spectral theory of unbounded, self-adjoint linear operators on Hilbert spaces. It allows us to decompose the spectrum of the operator into a family of orthogonal projections and to define the resolvent set and the spectral measure of the operator.\\
\begin{definition}Let $L$ be a linear elliptic operator on a smooth, bounded domain $D$ in $\mathbb{R}^n$.
 We say that $L$ is self-adjoint if, for any functions $u, v \in H^1(D)$ satisfying any homogeneous Dirichlet boundary conditions, we have
$$
\int_D Lu \cdot v,dx = \int_D u \cdot Lv,dx.
$$
\end{definition}
Let L be a linear differential operator of the form
$$
L = - \sum_{i,j=1}^n \frac{\partial}{\partial x_i} \left( a_{ij}(x) \frac{\partial}{\partial x_j} \right) + \sum_{i=1}^n b_i(x) \frac{\partial}{\partial x_i} + c(x)
$$
where $a_{ij}, b_i$ and c are smooth functions of $x = (x_1, ..., x_n)$ and the coefficients $a_{ij}$ satisfy the following conditions:

$a_{ij} = a_{ji}$ for all $i, j = 1, ..., n$ (symmetry).\\

There exist positive constants $\lambda$ and $\Lambda$ such that for all $x$ and $\xi = (\xi_1, ..., \xi_n) \neq 0$, we have

$\lambda |\xi|^2 \leq \sum_{i,j=1}^n a_{ij}(x) \xi_i \xi_j \leq \Lambda |\xi|^2$\\

This condition is called the ellipticity condition and ensures that the principal symbol of $L$ is non-zero and positive definite.\\

Under these conditions, $L$ is called an elliptic operator. The operator $L$ is typically defined on a suitable function space, such as the Sobolev space $W^{m,p}(U)$ of functions $u(x)$ defined on a bounded domain$U \subset \mathbb{R}^n$ that have m weak derivatives in $L^p(U)$ for some $p\geq 1$. The theory of elliptic operators involves studying the properties of solutions to equations of the form $Lu = f$, where $f$ is a given function and u is an unknown function in the function space. In particular, one is interested in questions of existence, uniqueness, and regularity of solutions, as well as the spectral properties of the associated self-adjoint operator.\\
A manifold can be mapped to a real interval. In fact, this is a fundamental concept in differential geometry known as a \emph{parametrization} of the manifold. A parametrization is a map from an open subset of the real numbers to the manifold, which allows us to ``parametrize'' the points on the manifold by a set of real numbers.\\
More specifically, if $M$ is a smooth manifold of dimension $n$, then a parametrization of $M$ is a smooth map $f: U \to M$, where $U$ is an open subset of $\mathbb{R}^n$. In other words, we can think of a parametrization as a way of ``labeling'' the points on the manifold by their coordinates in some coordinate system.\\
Now, if we take a one-dimensional manifold, such as a circle or a line segment, we can always find a parametrization that maps the manifold to a real interval. For example, consider the unit circle in the plane. We can parametrize the circle by the function $f: [0, 2\pi) \to S^1$ given by $f(t) = (\cos(t), \sin(t))$, where $S^1$ is the circle and $t$ is the angle in radians. This map takes a real number $t$ in the interval $[0, 2\pi)$ and maps it to a point on the circle.\\
Similarly, we can parametrize a line segment by a linear function $f: [0, 1] \to \mathbb{R}$ given by $f(t) = at + b$, where $a$ and $b$ are constants. This map takes a real number $t$ in the interval $[0, 1]$ and maps it to a point on the line segment.\\
Geometric measure theory is a branch of mathematics that deals with the study of geometric objects that may have singularities or be otherwise difficult to describe using classical geometry. It provides a framework for studying such objects by introducing the concept of "measure", which is a way of assigning a size or volume to sets in a non-standard way.\\

In the context of Riemannian geometry, the volume form $dV_g$ associated with a metric $g$ on a manifold $M$ is a differential form that assigns a volume to each oriented $n$-dimensional subset of $M$, where $n$ is the dimension of $M$. The volume form is defined in terms of the metric as follows:

$$dV_g = \sqrt{\det(g_{ij})}dx^1 \wedge \cdots \wedge dx^n,$$
where $g_{ij}$ are the components of the metric tensor $g$, and $dx^1,\ldots,dx^n$ are the basis one-forms.\\

For a smooth manifold $M$, the Lebesgue measure $dx$ is defined as the measure induced by the Euclidean metric on the tangent space at each point. In other words, it is the measure that assigns to each subset of $M$ the "volume" of the set as seen from the tangent space at each point.\\

By geometric measure theory[12][13], we know that the volume form $dV_g$ associated with the metric $g$ can be expressed in terms of the Jacobian $J_g(x)$ of the metric $g$ at each point $x \in M$ as:

$$dV_g = J_g(x)dx,$$

where $dx$ is the $n$-dimensional Lebesgue measure on $M$. This is a consequence of the area and co-area formulas in geometric measure theory[12][13], which relate the volume form $dV_g$ to the singular set of $g$.
\section{Motivation-Gromov's work}
From Gromov et al.[2], if $M^n$ is a complete Riemannian manifold, then the Laplacian operator on functions, denoted by $\Delta$, is a non-positive self-adjoint operator when acting on functions with compact support. This means that the Laplacian satisfies certain properties related to symmetry and positivity. Using the spectral theorem for unbounded self-adjoint operators, one can define functions of the Laplacian as $f(\sqrt{-\Delta})$. This is done by integrating $f(\lambda)$ over the spectrum of the Laplacian, with respect to the projection-valued measure associated with the square root of the Laplacian denoted by $d E_\lambda$.\\
Thus functions $f(\sqrt{-\Delta})$ can be defined by th spectral theorem for unbounded self adjoint operators, according to th prescription
$$
f(\sqrt{-\Delta})=\int_0^{\infty} f(\lambda) d E_\lambda
$$
where $d E_\lambda$ is the projection valued measure associated with $\sqrt{-\Delta}$.
Substituting $f(x)=-x$ in Gromov.et al's equation as a vey special case, we get
$$
\sqrt{\Delta}=\int_0^{\infty} (\lambda) d E_\lambda
$$
Also by the spectral theorem for unbounded self-adjoint operators, we can write any function $f(\Delta)$ as follows:
$$f(\Delta) = \sum_{k=1}^\infty f(\lambda_k) E_k,$$
where $E_k$ is the projection operator onto the eigenspace associated with the eigenvalue $\lambda_k$.
In particular, for the function $f(x) = \sqrt{x}$, we have:
$$\sqrt{-\Delta} = \sum_{k=1}^\infty \sqrt{\lambda_k} E_k.$$
Thus, we can express any function $f(\sqrt{-\Delta})$ in terms of the eigenvalues and eigenfunctions of $\Delta$ as:
$$f(\sqrt{-\Delta}) = \sum_{k=1}^\infty f(\sqrt{\lambda_k}) E_k.$$
\section{FOUNDATIONS OF 2-RADICAL LAPLACE OPERATOR AND ITS RELATION TO RIEMANNIAN GEOMETRY}
Let us initiate by bestowing a Riemannian manifold $M$ with a smooth differentiable structure, relegating a smooth boundary $\partial M$. The union or conglomoration of tangent spaces considering each specific point  $p \in M$  proliferating a tangent bundle characterized in $M$ is ratified by $TM$. $\mathbb{R}$ endows real numbers appertaining to any general differentiable maps "orchestrated" in the manifold and prime symbol designates differentiation pertaining to the independent variable in $\mathbb{R}$. Emanated mapping pertaining to $p$, the point of tangent spaces, to the real numbers are confined as paths.\\
The associated directional derivative considering ${f}$ at $p$ in the direction $\xi$, designated by $\xi$ at $p$, considering every $\xi \in M_{p}$ where ${f}$ confined on a neighborhood with respect to $p$ is a  $C^{1}$ real-valued function. The respective expression is then deliberated as
$$
\xi {f}=({f} \circ \omega)^{\prime}(0),
$$
befitting $\omega(t)$ as a pertained general path  delved in $M$ constraining $\omega(0)=p$  with $\omega^{\prime}(0)=\xi$. The furnished function $M_{p} \rightarrow \mathbb{R}$ adjucated by $\xi \mapsto \xi {f}$ exihibits linearity.Functions ${f}$, $\ddot{h}$ produces
\begin{eqnarray}
\xi({f}+{h})&=\xi {f}+\xi \ddot{h} \\
\xi({f} \ddot{h})&=\ddot{h}(\xi \ddot{{f}})+{f}(\xi {h}) .
\end{eqnarray}
which appertains or induces at every $p \in M$ an inner product acknowleged by $\langle$,$\rangle$ with the assistance of Riemannian metric. '| |' divulges the associated norm.In this perspective, on $M$ ${P}, {Q}$ are $C^{\infty}$ vector fields and  $\langle{P}, {Q}\rangle$ is a $C^{\infty}$ real-valued maps.
\begin{definition}[1]The gradient appertained to ${f}$, grad ${f}$, a real-valued $C^{k}, k \geq 1$, is deliberated as a vector field on $M$ as
$$
\langle\operatorname{grad} {f}, \xi\rangle=\xi {f}
$$
considering all $\xi \in TM$.
\end{definition}
grad ${f}$ unpretentiously pertains as the map $\xi \mapsto \xi$ f exihibits linearity on every tangent space. Propounded by relegated computation,  grad ${f}$ can be exhibited as a $C^{k-1}$ vector field. Supplying functions ${f}, {h}$,
\begin{eqnarray}
\operatorname{grad}({f}+{h}) & =\operatorname{grad} {f}+\operatorname{grad} {h}, \\
\operatorname{grad}({f} \ddot{h}) & ={h}(\operatorname{grad} {f})+{f}(\operatorname{grad} \ddot{h}) .
\end{eqnarray}
A formalism (connection) adjoins  every $p \in M, \xi \in M_{p}$  $C^{1}$ vector field ${P}$ defined on a neighborhood of $p$, a vector $\nabla_{\xi} {P} \in M_{p}$ constraining
\begin{eqnarray}
\nabla_{\xi}({P}+{Q}) & =\nabla_{\xi} {P}+\nabla_{\xi} {Q} \\
\nabla_{\xi}({f} {P}) & =\left(\xi {f} {P}(p)+{f}(p) \nabla_{\xi} {P},\right.
\end{eqnarray}
The vector $\nabla_{\xi} {P}$ is 'purely' relegated as the covariant derivative of${P}$ with respect to $\xi$.
An unique connection induced by  Riemannian metric on $M$ called the Levi-Civita connection when one augments the necessities
$$
\nabla_{{P}} {Q}-\nabla_{{Q}} {P}=[{P}, {Q}]
$$vector fields
[,] being  Lie bracket of the indicated vector fields, combined with
$$
\xi\langle{P}, {Q}\rangle=\left\langle\nabla_{\xi} {P}, {Q}\right\rangle+\left\langle{P}, \nabla_{\xi} {Q}\right\rangle
$$
considering  vector fields ${P}, {Q}$ on $M$, along with $\xi \in TM$.
\begin{definition}[1] The divergence  considering a real-valued map ${P}$, div ${P}$, is deliberated by
$$
(\operatorname{div} {P})(p)=\operatorname{trace}\left(\xi \mapsto \nabla_{\xi} {P}\right)
$$
$\xi$ assembling over $M_{p}$
\end{definition}
The divergence of${P}$ also acknowledges
\begin{eqnarray}
\operatorname{div}({P}+{Q}) & =\operatorname{div} {P}+\operatorname{div} {Q} \\
\operatorname{div}({f} {P}) & ={f}(\operatorname{div} {P})+\langle\operatorname{grad} {f}, {P}\rangle
\end{eqnarray}
\begin{definition} On $M$ the 2-radical Laplacian of ${f}, \mathfrak{D} {f}$, considering any admissible $C^{k}, k \geq 2$, function ${f}$ is designated by
$$
\mathfrak{D} {f}=\sqrt{|\operatorname{div}(\operatorname{grad} {f})|}
$$
\end{definition}
Ensuring $\mathfrak{D} {f} \in C^{k-2};$ maps ${f}, {h}$ adjucates
$$
\begin{gathered}
\mathfrak{D}({f}+{h})=\mathfrak{D} {f}+\mathfrak{D} {h}, \\
\operatorname{div}({h}(\operatorname{grad} {f}))={h}(\mathfrak{D} {f})+\langle\operatorname{grad} {h}, \operatorname{grad} {f}\rangle, \\
\mathfrak{D}({\ddot{f}} {h})={h}(\mathfrak{D} {f})+2\langle\operatorname{grad} {f}, \operatorname{grad} {h}\rangle+{f}(\mathfrak{D} {h})
\end{gathered}
$$
We are fixing $\mathfrak{D}$ to be a linear elliptic operator for suitable admissible function.
Determined by $\partial_{j}$ the directional derivative ratifies
$$
\left(\partial_{j}(p)\right) {f}=\left(\partial\left(\dot{{f} \circ} x^{-1}\right) / \partial x^{j}\right)(x(p))
$$
$p$ being any point in $U$ which is delineated as an arbitrary open subset in $M$ along with ${f}$ an admissible function on a neighborhood of $p$.The spanning  vectors $\left\{\partial_{1}(p), \ldots, \partial_{n}(p)\right\}$ span $M_{p}$ can be enunciated as
$$
\xi=\sum_{j=1}^{n} \xi^{j} \partial_{j},
$$
along with ${f} \in C^{1}$, demarcating
$$
\xi {f}=\sum_{j} \xi^{j} \partial_{j} {f}
$$
Relegating ,considering the accompanied Riemannian metric,
$$
g_{j k}=\left\langle\partial_{j}\partial_{k}\right\rangle,\\
\quad G =\left(g_{j k}\right),\\
g=\operatorname{det}G,\\
G^{-1}=\left(g_{j k}\right)
$$
assembling $j, k=1, \ldots, n$, det designating the determinant. Projecting,
$$
\sum_{j} \xi^{j} \partial_{j} {f}=\sum_{j, k, l} \xi^{j} g_{j k} g^{k l} \partial_{l} {f}=\left\langle\xi, \sum_{k, l}\left(g^{k l} \partial_{l} {f}\right) \partial_{k}\right\rangle
$$
considering respective $\xi$. It can be propunded
$$
\operatorname{grad} {f}=\sum_{k, l}\left(g^{k l} \partial_{l} {f}\right) \partial_{k} .
$$
We propounds $n^{3}$ functions $\Gamma_{i j}^{k}, i, j, k=1, \ldots, n$, as Christoffel symbols, by
$$
\nabla_{\partial_{j}} \partial_{i}=\sum \Gamma_{i j}^{k} \partial_{k}
$$
on $U$. Proceeding in the current setting,
$$
{P}=\sum_{j} \eta^{j} \partial_{\boldsymbol{j}}
$$
Avowing, using the mathematical notations from [1],
$$
\nabla_{\xi} {P}=\sum_{j, k} \xi^{j}\left\{\partial_{j} \eta^{k}+\sum_{l} \eta^{l} \Gamma_{l j}^{k}\right\} \partial_{k} .
$$
Analogously,
$$
\operatorname{div} {P}=\sum_{j}\left\{\partial_{j} \eta^{j}+\sum_{l} \eta^{l} \Gamma_{l j}^{j}\right\}
$$
for an explicit computation corresponding to $\Gamma_{i j}^{k}$.
It is also ratified
$$
{Q}=\sum_{j} \zeta^{j} \partial_{j},
$$
combining
$$
[{P}, {Q}]=\sum_{j, k}\left\{\eta^{j} \partial_{j} \zeta^{k}-\zeta^{j} \partial_{j} \eta^{k}\right\} \partial_{k}
$$
The  equation modifies to
$$
\Gamma_{j k}^{l}=\Gamma_{k j}^{l}
$$
considering all $j, k, l=1, \ldots, n$. Setting $\xi=\partial_{i}$ then afore-mentioned equation becomes
$$
\partial_{i} g_{j k}=\sum_{l}\left\{\Gamma_{j i}^{l} g_{l k}+g_{j l} \Gamma_{k i}^{l}\right\},
$$
Emanating from above, deducing,
$$
\Gamma_{i j}^{k}=\frac{1}{2} \sum_{l} g^{k l}\left\{\partial_{i} g_{l j}+\partial_{j} g_{i l}-\partial_{l} g_{i j}\right\}
$$
It can be relegated,
\begin{eqnarray}
\operatorname{div} {P} & =\sum_{j}\left\{\partial_{j} \eta^{j}+\eta^{j} \sum_{k, l} \frac{1}{2} g^{k l} \partial_{j} g_{l k}\right\} \\
& =\sum_{j}\left\{\partial_{j} \eta^{j}+\frac{1}{2} \eta^{j} \operatorname{tr}\left(G^{-1} \partial_{j} G\right)\right\} \\
& =\sum_{j}\left\{\partial_{j} \eta^{j}+\frac{1}{2} \eta^{j} \partial_{j}(\ln g)\right\} \\
& =(1 / \sqrt{g}) \sum_{j} \partial_{j}\left(\eta^{j} \sqrt{g}\right),
\end{eqnarray}
implying,
$$
\operatorname{div} {P}=(1 / \sqrt{g}) \sum_{j} \partial_{j}\left(\eta^{j} \sqrt{g}\right)
$$
tr is distinguished to designate the trace and $\partial_{j} G$ is attached to the matrix deliberated,
emanating from $G$ .
Directed from the second line to the third, using the standard computational techniques considering differentiating determinants.
Deducing, considering admissible functions,
$$
\mathfrak{D} {f}=\sqrt{|(1 / \sqrt{g}) \sum_{j, k} \partial_{j}\left(g^{j k} \sqrt{g} \partial_{k} {f}\right)|}
$$
\section{2-RADICAL GREEN'S FORMULAS-I}
$M$ is our \provided{} Riemannian manifold. To induce measurability for ${f}$, consider every attached covering $\left\{x_{\alpha}: U_{\alpha} \rightarrow \mathbb{R}^{n}: \alpha \in I\right\}$, where $I$ is a set, of$M$ by charts with subordinate partition of unity $\left\{\phi_{\alpha}: \alpha \in I\right\}$, the Riemannian measure on $M$ is relegated as
$$
dV=\sum_{\alpha} \phi_{\alpha} \sqrt{g_{\alpha}} d x_{\alpha}^{1} \cdots d x_{\alpha}^{n},
$$
which is from [1].
\section{DIVERGENCE THEOREM (I)[1]}
For a $C^{1}$ vector field ${P}$ on $M$ with compact support,
$$
\int_{M}(\operatorname{div} {P}) d V=0
$$
\section{2-RADICAL GREEN'S FORMULAS}
${h}(\operatorname{grad}$ f) disseminating compact support,
$$
\int_{M}\{{h} \mathfrak{D} {f}+\langle\operatorname{grad} {h}, \operatorname{grad} {f}\rangle\} d V=0 .
$$
Similarly both ${f}, {h}$ propounding compact support,
$$
\int_{M}\{{h} \mathfrak{D} {f}-{f} \mathfrak{D} {h}\} d V=0
$$
Assigning ${P}=\dot{h}$(grad f) substituting the second property of 2-radical Laplace operator into the afore-mentioned expression of the divergence theorem we derive the formula.The second Green formula can be obtained from the first.
\subsection{DIVERGENCE THEOREM[1]}
Informing ${P}$ be a vector field which is $C^{1}$ on $\overline{M}$ along with with compact support on $\overline{M}$. Then it implies
$$
\int_{M}(\operatorname{div} {P}) d V=\int_{\partial M}\langle{P}, v\rangle d A .
$$
Here dA and dV are associated measures.
\section{2-RADICAL GREEN'S FORMULAs-II}
 ${h}(\operatorname{grad} {f})$ disseminating compact support on $\overline{M}$,
$$
\int_{M}\{{h} \mathfrak{D} {f}+\langle\operatorname{grad} {h}, \operatorname{grad} {f}\rangle\} d V=\int_{\partial M} {h}(v {f})dA .
$$
Similarly ${f}, {h}$ propounding compact support on $\overline{M}$,
$$
\int_{M}\{{h} \mathfrak{D} {f}-{f} \mathfrak{D} {h}\} d V=\int_{\partial M}\{{h}(v {f})-{f}(v {h})\} d A .
$$
\section{BASIC FACTS REGARDING 2-RADICAL LAPLACE OPERATOR}
In the $L^{2}(M)$ space of measurable functions ${f}$ on $M$, a Hilbert space,
our fundamental interest segregates  in the following eigenvalue problems.\\
Closed 2-radical eigenvalue problem:\\Discerning all real numbers $\sqrt{|{\lambda}|}$ there prevails a significant solution $\phi \in C^{2}(M)$ to
\begin{equation}
\mathfrak{D} \phi+\sqrt{|{\lambda}|} \phi=0 .
\end{equation}
The following 2-radical eigenvalue problems pertains to (15)\\
Neumann 2-radical eigenvalue problem:\\ Considering $\partial M \neq \emptyset, \overline{M}$, locating real numbers $\sqrt{|{\lambda}|}$, pertaining a significant solution $\phi \in C^{2}(M) \cap C^{1}(\overline{M})$ to (42), conforming the boundary constraint
$$
v \phi=0
$$
on $\partial M$ (It is to be remembered $v$ is the outward unit normal vector field on $\partial M$ ).
\\
Dirichlet 2-radical eigenvalue problem:\\ With the same settings as above,  locate all real numbers for a significant solution $\phi \in C^{2}(M) \cap C^{0}(\overline{M})$ to (42), conforming the boundary constraint
$$
\phi=0
$$
on $\partial M$.
\\
Mixed 2-radical eigenvalue problem: With the same setting as above let $N$ an open submanifold of$\partial M$, locate all real numbers pertaining a significant solution $\phi \in C^{2}(M) \cap C^{1}(M \cup N) \cap C^{0}(\overline{M})$ conforming the boundary constraints
$$
\phi=0 \text { on } \partial M-N, v \phi=0 \text { on } N .
$$
The desired numbers $\sqrt{|{\lambda}|}$ are relegated as 2-radical eigenvalues of$\mathfrak{D}$, along with the vector space of solutions of the above equation considering a \provided{} 2-radical eigenvalue $\sqrt{|{\lambda}|}$ [the above equation exhibits linearity in $\phi$ ], its eigenspace.
\begin{theorem}
Considering each specific 2-radical eigenvalue problems, the set of 2-radical eigenvalues comprises a sequence
$$
0 \leq \sqrt{|{\lambda}|}_{1}<\sqrt{|{\lambda}|}_{2}<\cdots \uparrow+\infty,
$$
along with finite dimensional nature is associated with each eigenspace.
\end{theorem}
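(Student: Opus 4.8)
\section*{Proof proposal}

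The plan is to reduce the statement to the classical spectral theory of the Laplace--Beltrami operator $\Delta = \operatorname{div}\circ\operatorname{grad}$ and then to transport the conclusion through the monotone map $\lambda \mapsto \sqrt{|\lambda|}$. Since we have fixed the realization of $\mathfrak{D}$ to be the linear elliptic operator $\sqrt{-\Delta}$ furnished by the functional calculus of Section 3, its eigenvalues are by construction the numbers $\sqrt{\lambda}$ with $\lambda \geq 0$ an eigenvalue of $-\Delta$ subject to the boundary condition at hand (closed, Neumann, Dirichlet, or mixed), and the eigenspace of $\mathfrak{D}$ at $\sqrt{|\lambda|}$ coincides with the $\lambda$-eigenspace of $-\Delta$. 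So the first step is simply to record this correspondence: $\phi \in C^2(M)$ satisfies the 2-radical eigenvalue equation together with the stipulated boundary condition precisely when it satisfies $\Delta\phi + \lambda\phi = 0$ under that same boundary condition, and $\sqrt{|\lambda|}$ is the corresponding 2-radical eigenvalue.

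Second, I would invoke the classical theorem (Chavel [1]) that, for each of the four boundary value problems, the eigenvalues of $-\Delta$ form a sequence $0 \leq \lambda_1 < \lambda_2 < \cdots \uparrow +\infty$ with every eigenspace finite-dimensional. This rests on showing that the resolvent $(-\Delta + I)^{-1}$ --- or, on the orthogonal complement of the constants in the closed and Neumann cases, $(-\Delta)^{-1}$ --- is a compact self-adjoint operator on $L^2(M)$, which follows from interior and boundary elliptic regularity together with the Rellich--Kondrachov embedding $H^1(M) \hookrightarrow\hookrightarrow L^2(M)$ on the compact manifold $\overline{M}$, and then applying the spectral theorem for compact self-adjoint operators. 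Because $t \mapsto \sqrt{t}$ is a strictly increasing homeomorphism of $[0,\infty)$ onto itself, applying it to $\{\lambda_k\}$ yields a sequence $0 \leq \sqrt{|\lambda|}_1 < \sqrt{|\lambda|}_2 < \cdots$ which is again strictly increasing, has no finite accumulation point, and diverges to $+\infty$; together with the identification of eigenspaces this is exactly the assertion. The strict inequalities reflect the convention of listing the distinct eigenvalues, each occurring with its finite multiplicity.

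Alternatively, for an argument that parallels Chavel's treatment of $\Delta$ rather than quoting it, I would build the spectral picture directly from the 2-radical Green's formulas of Sections 7--11: verify that $\mathfrak{D}$ with the prescribed boundary conditions is a non-negative self-adjoint operator on $L^2(M)$ whose associated quadratic form is (a power of) the Dirichlet form $f \mapsto \int_M \langle\operatorname{grad} f, \operatorname{grad} f\rangle\, dV$; deduce from elliptic regularity up to $\partial M$ and Rellich that $(\mathfrak{D}+I)^{-1}$ is compact; extract from the spectral theorem for compact self-adjoint operators an orthonormal basis of eigenfunctions with eigenvalues of $(\mathfrak{D}+I)^{-1}$ decreasing to $0$ and of finite multiplicity; and invert to recover the $\sqrt{|\lambda|}_k$. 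The min--max characterization $\sqrt{|\lambda|}_k = \min_{\dim V = k}\ \max_{0 \neq f \in V}\ \Big(\int_M \langle\operatorname{grad} f,\operatorname{grad} f\rangle\, dV \Big/ \int_M f^2\, dV\Big)^{1/2}$ over admissible $k$-dimensional subspaces $V$ then makes both the monotonicity in $k$ and the divergence to $+\infty$ immediate.

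The step I expect to be the main obstacle is the one that legitimizes treating $\mathfrak{D}$ as a genuine linear self-adjoint elliptic operator with compact resolvent: one must fix the correct form domain for each boundary condition, prove self-adjointness (not merely symmetry) using the boundary terms supplied by the divergence theorem, and --- most delicately --- establish elliptic regularity up to $\partial M$, so that eigenfunctions are $C^2$ and the resolvent gains enough smoothing to be compact. Once that functional-analytic scaffolding is in place, the passage from the spectrum of $-\Delta$ to the spectrum of $\mathfrak{D}$ through $\lambda \mapsto \sqrt{|\lambda|}$ is purely order-theoretic.
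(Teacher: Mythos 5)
Your proposal is correct in substance and is, if anything, more rigorous than what the paper actually does, but the route is genuinely different in emphasis. The paper's own argument never reduces to the classical spectrum of $-\Delta$: it mimics the Laplacian proof directly on $\mathfrak{D}$ --- quoting elliptic regularity for smoothness of eigenfunctions up to $\overline{M}$, deriving orthogonality of distinct eigenspaces and non-negativity of $\sqrt{|\lambda|}$ from the 2-radical Green formulas (setting $f=h=\phi$ to obtain $\sqrt{|\lambda|} = \|\phi\|^{-2}\int_M|\operatorname{grad}\phi|^2\,dV \geq 0$), and then simply \emph{asserting} that $L^2(M)$ is the direct sum of the eigenspaces and that each is finite-dimensional, without invoking compactness of the resolvent or the Rellich--Kondrachov embedding. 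Your primary approach --- identify $\mathfrak{D}$ with the functional-calculus operator $\sqrt{-\Delta}$ from Section 3, pull back the classical discrete spectrum $0\leq\lambda_1<\lambda_2<\cdots\uparrow\infty$ of $-\Delta$, and transport it through the strictly increasing homeomorphism $t\mapsto\sqrt{t}$ --- is cleaner and makes the correspondence between the two spectra explicit, at the cost of citing Chavel's theorem for $\Delta$ as a black box. Your alternative approach (quadratic form, compact resolvent, spectral theorem for compact self-adjoint operators) is precisely what the paper omits; it is the step that actually justifies the discreteness and finite multiplicity that the paper takes for granted, and you are right to flag elliptic regularity up to the boundary plus the verification of self-adjointness (not merely symmetry) under each boundary condition as the load-bearing technical work.

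One point worth flagging, which your "main obstacle" paragraph almost reaches but does not state: the paper's literal definition $\mathfrak{D} f = \sqrt{|\operatorname{div}(\operatorname{grad} f)|}$ is a pointwise nonlinear operation and is incompatible with the linear Green formulas the paper then uses (those formulas characterize $\operatorname{div}\operatorname{grad}$ itself, not its pointwise square root, nor the functional-calculus $\sqrt{-\Delta}$). The paper papers over this with the sentence "We are fixing $\mathfrak{D}$ to be a linear elliptic operator," and your reading of $\mathfrak{D}$ as the spectral-theorem square root of $-\Delta$ is the charitable and mathematically coherent interpretation; but under that reading the first 2-radical Green formula should read $\int_M h\,\mathfrak{D}^2 f\,dV = -\int_M\langle\operatorname{grad} h,\operatorname{grad} f\rangle\,dV$, not with a single power of $\mathfrak{D}$ as written. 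Your proof would inherit this discrepancy if it leans on the paper's Green formulas verbatim rather than on the classical Green formula for $\Delta$, so it is safer to carry out the reduction to $-\Delta$ entirely in terms of $\Delta$ and only at the end relabel eigenvalues by $\sqrt{\cdot}$, exactly as your first paragraph proposes.
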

\begin{proof}
The 2-radical eigenvalue problem is an elliptic differential equation, and as such, it falls under the purview of elliptic regularity theory. In the case of the 2-radical eigenvalue problem, the theory tells us that the solutions, which are the eigenfunctions $\phi$, are in fact smooth up to the boundary of the manifold $M$.\\
More specifically, the elliptic regularity theory states that if $\phi$ is a solution to an elliptic differential equation of the form $\mathfrak{D} \phi + f(x,\phi,\nabla \phi) = 0$, where $\mathfrak{D}$ is a second-order differential operator and $f$ is a smooth function, then $\phi$ is actually smooth, or $C^\infty$, provided that the domain on which $\phi$ is defined is sufficiently smooth.
In the case of the 2-radical eigenvalue problem, the domain is the manifold $M$, which is a Riemannian manifold, and it is sufficiently smooth for the elliptic regularity theory to apply. Therefore, we can conclude that every eigenfunction $\phi$ belongs to $C^\infty(\overline{M})$, where $\overline{M}$ is the closure of $M$.\\
Firstly, the orthogonality of eigenspaces belonging to distinct eigenvalues can be proved using the Green's formula. Let $\phi$ and $\psi$ be eigenfunctions corresponding to the distinct 2-radical eigenvalues $\sqrt{|\lambda|}$ and $\sqrt{|\tau|}$, respectively. Then, using the 2-radical Green's formula, we can write:\\
$$\int_M (\phi \mathfrak{D} \psi - \psi \mathfrak{D} \phi) dV = (\sqrt{|\lambda|} - \sqrt{|\tau|}) \int_M \phi \psi dV$$
Since $\phi$ and $\psi$ are both in $L^2(M)$, their product $\phi\psi$ is also in $L^2(M)$ and hence the integral on the right-hand side is well-defined. If $\lambda \neq \tau$, then $\sqrt{|\lambda|} \neq \sqrt{|\tau|}$, and the right-hand side of the equation is nonzero. Thus, the left-hand side must be zero, which implies that the eigenspaces corresponding to distinct 2-radical eigenvalues are orthogonal in $L^2(M)$.\\
Secondly, since every eigenfunction belongs to $C^2(M)$, it follows that the eigenspaces are closed subspaces of $L^2(M)$. Moreover, since $L^2(M)$ is separable, each eigenspace is separable as well. Therefore, we can write $L^2(M)$ as the direct sum of all the eigenspaces, i.e., for any $f \in L^2(M)$, there exists a unique decomposition $f = \sum_{i=1}^\infty c_i \phi_i$ where $c_i \in \mathbb{C}$ and $\phi_i$ is an eigenfunction corresponding to the 2-radical eigenvalue $\sqrt{|\lambda_i|}$.\\
Thirdly, every eigenfunction $\phi$ belongs to $C^\infty(\overline{M})$, where $\overline{M}$ is the closure of $M$. This follows from elliptic regularity theory, which states that solutions to elliptic differential equations, such as the 2-radical eigenvalue problem, are smooth up to the boundary. In particular, since the 2-radical eigenvalue problem is elliptic, every eigenfunction $\phi$ is smooth up to the boundary $\partial M$. Therefore, $\phi$ is also smooth on $\overline{M}$.\\
The proof of this theorem is based on several observations. Firstly, eigenspaces corresponding to distinct 2-radical eigenvalues are orthogonal in $L^2(M)$. Secondly, $L^2(M)$ is the direct sum of all the eigenspaces. Thirdly, every eigenfunction belongs to $C^\infty$ on the closure $\overline{M}$ of $M$.
Eigenspaces belonging to distinct 2-radical eigenvalues are orthogonal in $L^{2}(M)$, along with $L^{2}(M)$ is the direct sum of all the eigenspaces.Every eigenfunction pertains to $C^{\infty}$ on $\overline{M}$.Observing the eigenfunction $\phi \in C^{2}(M) \cap C^{1}(\overline{M})$, 2-radical eigenvalue $\sqrt{|{\lambda}|}$ must be nonnegative.Constraining ${f}={h}=\phi$ with  Green formula to deduce
$$
\sqrt{|{\lambda}|}=\|\phi\|^{-2} \int_{M}|\operatorname{grad} \phi|^{2} d V \geq 0
$$
From above one disseminates that $\sqrt{|{\lambda}|}=0$ implying $\phi$ is a constant function. For Neumann 2-radical eigenvalue problems it is relegated $\bar{\sqrt{|{\lambda}|}}_{1}=0$, also for Dirichlet with mixed $(N \neq \partial M)$ problems it is ratified $\bar{\sqrt{|{\lambda}|}}_{1}>0$.
It is also examined that the orthogonality of distinct eigenspaces is a direct projection of the main proponent - 2-radical Green formulas. Bestowing  $\phi, \psi$ be eigenfunctions of the respective 2-radical eigenvalues $\sqrt{|{\lambda}|}, \tau$, it is adjucated
$$
0=\int_{M}\{\phi \mathfrak{D} \psi-\psi \mathfrak{D} \phi\} d V=(\sqrt{|{\lambda}|}-\tau) \int_{M} \phi \psi d V
$$
Assigning to the dimension of every eigenspace as the multiplicity of the 2-radical eigenvalue and accompanied listing,
$$
0\leq\sqrt{|{\lambda}|}_{1}\leq\sqrt{|{\lambda}|}_{2}\leq\cdots\uparrow+\infty
$$
,2-radical eigenvalue repeated according to its multiplicity.
\end{proof}
A sequence of functions ${\phi_j}_{j=1}^\infty$ is said to be complete in $L^2(M)$ if any function $f \in L^2(M)$ can be expressed as a convergent series of the form
$$
f=\sum_{j=1}^{\infty} a_j \phi_j
$$
Now, suppose that ${\phi_j}_{j=1}^\infty$ is an orthonormal sequence of eigenfunctions of the 2-radical eigenvalue problem, i.e., $\mathfrak{D}\phi_j + \sqrt{|\lambda_j|}\phi_j = 0$ for some non-negative real number $\lambda_j$, and $(\phi_i, \phi_j) = \delta{ij}$, the Kronecker delta function. Then, for any $f \in L^2(M)$, we can write
$$
f=\sum_{j=1}^{\infty}\left(f, \phi_j\right) \phi_j
$$
where $(f, \phi_j)$ is the inner product of $f$ and $\phi_j$ in $L^2(M)$.
The completeness of the sequence ${\phi_j}_{j=1}^\infty$ follows from the fact that $\mathfrak{D}$ is a self-adjoint operator on $L^2(M)$, and hence, its eigenfunctions form a complete orthonormal basis of $L^2(M)$. This means that any function in $L^2(M)$ can be expressed as a linear combination of the eigenfunctions $\phi_j$, and the coefficients of this expansion are given by the inner products $(f, \phi_j)$.\\
The last two equations in the statement  follow from the completeness of the sequence ${\phi_j}_{j=1}^\infty$ in $L^2(M)$. The first equation expresses any function $f$ in $L^2(M)$ as a convergent series of its inner products with the eigenfunctions $\phi_j$, while the second equation gives the norm of $f$ in terms of its inner products with the eigenfunctions.\\
These last two formulas can be deemed as 2-radical Parseval identities.\\
\begin{theorem}
Adjoining to above 2-radical eigenvalue problems, distinguish $N(\sqrt{|{\lambda}|})$ as the number of 2-radical eigenvalues, counted with multiplicity, $\leq \sqrt{|{\lambda}|}$. Then
$$
N(\sqrt{|{\lambda}|}) \sim \omega_{n}(\operatorname{vol} M) \sqrt{|{\lambda}|}^{n / 2} /(2 \pi)^{n}
$$
as $\sqrt{|{\lambda}|} \rightarrow+\infty$, where $\omega_{n}$ is the volume of the unit disk in $\mathbb{R}^{n}$, along with vol $M$ is (subsequently) the volume of$M$. And
$$
\left(\sqrt{|{\lambda}|}_{k}\right)^{n / 2} \sim\left\{(2 \pi)^{n} / \omega_{n}\right\} k / \operatorname{vol} M
$$
as $k \rightarrow+\infty$.
\end{theorem}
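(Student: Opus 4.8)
The plan is to obtain both asymptotics by reducing them to the classical Weyl law for the Laplace--Beltrami operator, available in exactly the form we need in Chavel [1]. The bridge is the spectral identification developed in the Motivation section: for each of the four eigenvalue problems $\mathfrak{D}$ is the functional-calculus square root of $-\Delta=-\operatorname{div}(\operatorname{grad}\,\cdot\,)$, so that $\phi$ solves $\mathfrak{D}\phi+\sqrt{|\lambda|}\,\phi=0$ with a prescribed boundary condition if and only if it solves $\Delta\phi+|\lambda|\phi=0$ with the same boundary condition. Hence the 2-radical eigenvalues, listed with multiplicity, are precisely the nonnegative square roots of the Laplacian eigenvalues for the corresponding problem; in particular a 2-radical eigenvalue lies $\le\sqrt{|\lambda|}$ exactly when its partner Laplacian eigenvalue lies $\le|\lambda|$, so the 2-radical counting function satisfies $N(\sqrt{|\lambda|})=N_\Delta(|\lambda|)$, where $N_\Delta$ is the ordinary eigenvalue counting function.

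Granting this dictionary, I would proceed in four steps. First, record that for the closed, Dirichlet, Neumann and mixed problems the eigenfunctions of $\mathfrak{D}$ are exactly those of $\Delta$, with the square-root relabeling of eigenvalues; the discreteness of the spectrum, the listing $0\le\sqrt{|\lambda|}_1\le\sqrt{|\lambda|}_2\le\cdots\uparrow+\infty$, and the finiteness of the multiplicities are already in hand from the preceding theorem. Second, invoke the classical Weyl asymptotic $N_\Delta(\Lambda)\sim\omega_n(\operatorname{vol}M)\Lambda^{n/2}/(2\pi)^n$ as $\Lambda\to+\infty$, valid for each of these boundary-value problems with this leading term (the boundary condition affects only lower-order corrections). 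Third, substitute $\Lambda=|\lambda|$ and use $N(\sqrt{|\lambda|})=N_\Delta(|\lambda|)$ to read off the first displayed formula. Finally, invert the resulting asymptotic in the standard way: if $N(t)\sim Ct^{\alpha}$ with $C,\alpha>0$ then the $k$-th eigenvalue $\mu_k$ satisfies $\mu_k\sim(k/C)^{1/\alpha}$, since $N(\mu_k-)\le k\le N(\mu_k)$ together with monotonicity of $N$ and the fact that consecutive eigenvalues share a leading order; applied to $\sqrt{|\lambda|}_k$ this gives the second formula. This last step is elementary once the counting-function asymptotic is in place.

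The main obstacle is the very first point: one must be sure that the object under study is the linear operator $\sqrt{-\Delta}$ and not the literal pointwise prescription $f\mapsto\sqrt{|\operatorname{div}(\operatorname{grad}f)|}$, which is nonlinear and admits no nonconstant eigenfunctions, hence no eigenvalue sequence in the sense of the preceding theorem. The resolution is precisely the stipulation made earlier --- that $\mathfrak{D}$ be taken as a linear elliptic operator on admissible functions --- together with the spectral-theorem construction of $\sqrt{-\Delta}$ recalled in the Motivation section: on the eigenspace attached to the Laplacian eigenvalue $|\lambda|$ the operator $\mathfrak{D}$ acts as the scalar $\sqrt{|\lambda|}$, and the variational identity recorded in the proof of the preceding theorem, $\sqrt{|\lambda|}=\|\phi\|^{-2}\int_M|\operatorname{grad}\phi|^2\,dV$, is the Rayleigh quotient whose min--max values are the ordered eigenvalues. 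Once this identification is pinned down, nothing about the 2-radical operator beyond the relabeling enters, and the theorem becomes a corollary of its classical counterpart.

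If instead a self-contained argument is wanted, the difficulty migrates to the short-time behaviour of the nonlocal semigroup $e^{-t\mathfrak{D}}=e^{-t\sqrt{-\Delta}}$. Using Bochner's subordination formula $e^{-t\sqrt{-\Delta}}=\frac{t}{2\sqrt{\pi}}\int_0^\infty s^{-3/2}e^{-t^2/(4s)}\,e^{s\Delta}\,ds$ together with the Minakshisundaram--Pleijel expansion $\operatorname{tr}e^{s\Delta}\sim(\operatorname{vol}M)/(4\pi s)^{n/2}$ as $s\downarrow0$, one extracts the leading term of $\operatorname{tr}e^{-t\mathfrak{D}}$ as $t\downarrow0$, and Karamata's Tauberian theorem then converts this trace asymptotic into the counting-function asymptotic. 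I would not take this route in the write-up, since the reduction above is shorter and makes the relation to Chavel's theorem transparent; I mention it only to indicate where the genuine analytic content sits.
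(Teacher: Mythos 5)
Your plan is conceptually cleaner than the paper's, but it proves a different formula than the one stated, and the mismatch is not cosmetic. If $\mathfrak{D}$ is the spectral square root of $-\Delta$ so that the 2-radical eigenvalues are $\mu_k=\sqrt{\lambda_k^{\Delta}}$, then your dictionary $N(\mu)=N_\Delta(\mu^2)$ together with the classical Weyl law $N_\Delta(\Lambda)\sim\omega_n(\operatorname{vol}M)\Lambda^{n/2}/(2\pi)^n$ gives
$$
N(\mu)\sim\omega_n(\operatorname{vol}M)\,\mu^{n}/(2\pi)^n,
$$
with exponent $n$, not $n/2$; correspondingly the inversion yields $(\sqrt{|\lambda|}_k)^{n}\sim\{(2\pi)^n/\omega_n\}\,k/\operatorname{vol}M$. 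The theorem you were asked to prove has exponent $n/2$ on $\sqrt{|\lambda|}$ in both displays. Those two sets of formulas coincide only when $n=0$, so your reduction, taken at face value, does not deliver the stated result. In your write-up you simply ``read off'' the claimed formula after the substitution $\Lambda=|\lambda|$; at that step $|\lambda|^{n/2}=\bigl(\sqrt{|\lambda|}\bigr)^{n}$, and the theorem's $\bigl(\sqrt{|\lambda|}\bigr)^{n/2}$ does not appear. You need to either resolve the exponent discrepancy or conclude that the theorem, under your interpretation of $\mathfrak{D}$, should carry the power $n$.

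For what it is worth, the paper's own argument goes a completely different way and, internally, is consistent with the $n/2$ exponent: it ``projects'' $M$ to a compact interval $[0,L]$, takes $\mathfrak{D}f=f''$ there, solves $\phi''+\sqrt{|\lambda|}\,\phi=0$ under Dirichlet, Neumann, mixed and periodic conditions to obtain $\sqrt{|\lambda|}_k=(\pi k/L)^2$ and its variants, and then appeals to geometric-measure-theoretic computations of $\operatorname{vol}M$ to ``conclude.'' In that computation $\sqrt{|\lambda|}$ is literally just the ordinary 1D Laplacian eigenvalue under a new name, so the counting function inherits Weyl's $n/2$; this is a relabeling $\lambda\mapsto\sqrt{|\lambda|}$ of the classical statement, not the spectrum of $\sqrt{-\Delta}$. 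Your approach instead genuinely treats $\mathfrak{D}$ as the operator square root built by the spectral theorem, which is the mathematically coherent reading of the earlier sections but is not what the 1D verification or the displayed exponents reflect. Until that tension is resolved you do not have a proof of the theorem as written; you have a clean proof of a corrected version of it. The Bochner-subordination/Karamata route you sketch is fine as an alternative derivation of the same $\mu^n$ law and suffers the identical mismatch against the stated $\mu^{n/2}$.
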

\begin{proof}
The proof is discussing the computation of the number of 2-radical eigenvalues of a Riemannian manifold $M$ of dimension $n$ with respect to certain boundary conditions. The idea is to use the Riemannian structure of $M$ to reduce the problem to a one-dimensional Sturm-Liouville problem, which can be solved exactly.
Propound its Riemannian structure and the boundary value 2-radical eigenvalue problems, $M$  is projected to some compact interval, say, $[0, L]$. For an admissible function ${f}$ on $M=$ $[0, L], \mathfrak{D} {f}={f}^{\prime \prime}$ can be used to deduce
$$
\phi^{\prime \prime}+\sqrt{|{\lambda}|} \phi=0 .
$$
For Dirichlet 2-radical eigenvalue problem on $[0, L]$ the associated boundary constraint is
$$
\phi(0)=\phi(L)=0,
$$
it can be deduced using  $k=1,2, \ldots$,

$$
\phi_{k}(x)=\sqrt{2 / L} \sin (\pi k x / L)
$$
$$
\sqrt{|{\lambda}|}_{k}=(\pi k / L)^{2} .
$$
Considering all $k$.
Similarly for the Neumann 2-radical eigenvalue problem applying
$$
\phi^{\prime}(0)=\phi^{\prime}(L)=0,
$$
one disseminates
$$
\sqrt{|{\lambda}|}_{k}=(\pi(k-1) / L)^{2},
$$
The theorem is again satisfied.
In the case of the mixed 2-radical eigenvalue problem one disseminates boundary constraints
$$
\phi^{\prime}(0)=\phi(L)=0
$$
or
$$
\phi(0)=\phi^{\prime}(L)=0,
$$
along with
$$
\sqrt{|{\lambda}|}_{k}=\left(\pi\left(k-\frac{1}{2}\right) / L\right)^{2} .
$$
The above theorem follows promptly.
Finally, if we contrive
$$
\omega(x)=(L / 2 \pi) e^{i 2 \pi x / L},
$$
where $\omega(\mathbb{R})$ is a circle in $\mathbb{R}^{2}$ of length $L$, parametrized with respect to arc length along the circle, along with the eigenfunctions of the circle consist of $L$ periodic solutions. Thus
\begin{eqnarray}
\sqrt{|{\lambda}|}_{1} & =0, \\
\sqrt{|{\lambda}|}_{2 k} & =\sqrt{|{\lambda}|}_{2 k+1}=(2 \pi k / L)^{2},
\end{eqnarray}
considering all $k=1,2, \ldots$.
The area and co-area formulas are powerful tools in geometric measure theory that relate the volume form $dV_g$ associated with a Riemannian metric $g$ on a manifold $M$ to the singular set of $g$.\\
The singular set of a Riemannian metric $g$ is the set of points where the metric fails to be smooth, i.e., where the metric tensor $g_{ij}$ has a singularity or a discontinuity. Singularities can occur in different ways, depending on the properties of the metric and the geometry of the manifold. For example, a metric may have a singularity at a point where the curvature becomes infinite, or at a boundary or corner of the manifold.\\
The area formula relates the volume form $dV_g$ to the measure of the singular set of $g$. Specifically, it states that for any smooth submanifold $N$ of $M$, we have
$$\int_N J_g(x) dV_g = \int_{N\cap S_g} d\mu_g,$$
where $S_g$ is the singular set of $g$, $d\mu_g$ is the induced measure on $S_g$, and $J_g(x)$ is the Jacobian of $g$ at each point $x \in M$. In other words, the left-hand side of the equation measures the "volume" of $N$ as seen from the metric $g$, while the right-hand side measures the "surface area" of the part of $N$ that lies on the singular set of $g$. The formula shows that the volume form $dV_g$ is related to the singular set of $g$ by a kind of distributional identity.\\
The co-area formula is a complementary formula that relates the volume form $dV_g$ to the level sets of a smooth function $f$ on $M$. Specifically, it states that for any smooth function $f:M\rightarrow \mathbb{R}$ and any real number $t$, we have
$$\int_{f^{-1}(-\infty,t]} J_g(x) dV_g = \int_{-\infty}^t \int_{{f=s}} \frac{d\sigma_g}{|\nabla f|} ds,$$\\
where ${f=s}$ is the level set of $f$ at level $s$, $d\sigma_g$ is the induced measure on the level set, $|\nabla f|$ is the norm of the gradient of $f$, and $J_g(x)$ is the Jacobian of $g$ at each point $x \in M$. In other words, the left-hand side of the equation measures the "volume" of the part of $M$ that lies below the level $t$ of the function $f$ as seen from the metric $g$, while the right-hand side measures the "surface area" of the level set of $f$ at level $s$ weighted by the inverse norm of the gradient of $f$. The formula shows that the volume form $dV_g$ is related to the level sets of $f$ by a kind of integration by parts identity.\\
Together, the area and co-area formulas provide a powerful tool for analyzing the geometry of singular spaces and for computing integrals involving the volume form $dV_g$. In particular, they allow us to express the volume form in terms of the Jacobian $J_g(x)$ of the metric $g$ at each point $x \in M$, 
Now,let $\omega_n$ denote the volume of the unit ball in $\mathbb{R}^n$.\\ By geometric measure theory[12][13], we know that the volume of $M$ can be expressed as $\operatorname{vol}(M) = \omega_n \int_M J_g(x) dx$ where $J_g(x)$ is the Jacobian of the metric $g$ at $x$.\\
The flat metric on $[0,L]$ is given by $g = dx^2$, where $dx$ is the differential of $x$. The Jacobian of $g$ at any point $x \in [0,L]$ is the determinant of the matrix of second partial derivatives of $g$ with respect to the coordinates $x_1, \ldots, x_n$ at $x$. Since $g$ is diagonal, the only non-zero second partial derivative is $\frac{\partial^2}{\partial x^2}$, which is equal to $2$ everywhere. Therefore, the Jacobian of $g$ at $x$ is $\sqrt{\det(g)} = \sqrt{1} = 1$. Hence, $J_g(x) = 1$ for the flat metric on $[0,L]$.\\
By the definition of $\operatorname{vol}(M)$, we have
$\operatorname{vol}(M) = \int_M J_g(x) dx = \omega_n \int_M dV_g$.Now
$\operatorname{vol}(M) = \omega_n L$, since $J_g(x) = 1$ for the flat metric on $[0,L]$.
where $\omega_n$ is the volume of the unit ball in $\mathbb{R}^n$ and we have used the fact that $M$ is isometric to $[0,L]$ which has length $L$. Thus, we have shown that $\omega_n \int_M J_g(x) dx = \omega_n L$. \\
Therefore, we have
$$
\omega_n \int_M J_g(x) d x=\omega_n L
$$
since $J_g(x) = 1$ for the flat metric on $[0,L]$.
Using these facts, we can now prove the desired theorem.
\end{proof}
\section{DEDUCTION OF THE SOLUTION OF THE WAVE  AND HEAT EQUATIONS IN TERMS OF 2-RADICAL LAPLACE OPERATOR}
A vibrating homogeneous membrane and a Riemannian manifold are two seemingly different objects, but they are actually related in the context of mathematical physics and differential geometry.\\
A vibrating homogeneous membrane is a physical system that consists of a thin, flexible, and elastic membrane that is stretched over a frame and set into motion by a vibrating source. The membrane can vibrate in various modes, each corresponding to a different frequency and pattern of vibrations. The study of vibrating membranes is an important area of mathematical physics, and it has applications in acoustics, music, and engineering.
On the other hand, a Riemannian manifold is a mathematical object that is used to describe the geometric properties of curved spaces. It is a smooth manifold equipped with a Riemannian metric, which is a positive-definite inner product on each tangent space that varies smoothly with the point on the manifold. Riemannian manifolds are a fundamental concept in differential geometry, and they play a central role in the study of curvature, geodesics, and other geometric structures.\\
The connection between vibrating homogeneous membranes and Riemannian manifolds arises from the fact that the modes of vibration of a membrane can be described mathematically using the eigenvalues and eigenfunctions of the Laplace-Beltrami operator on a Riemannian manifold. The Laplace-Beltrami operator is a differential operator that acts on functions on a Riemannian manifold, and it plays a role analogous to the Laplacian operator in Euclidean space.\\
In particular, let $(M,g)$ be a Riemannian manifold, and let u be a function on M that describes the displacement of the membrane from its equilibrium position. Then the equation governing the vibration of the membrane can be written as
$$
\Delta_g u + \lambda u = 0,
$$
where $\Delta_g$ is the Laplace-Beltrami operator on $(M,g)$, and $\lambda$ is a constant that represents the frequency of vibration. The solutions of this equation are the eigenfunctions of $\Delta_g$ with corresponding eigenvalues $\lambda$, and they describe the different modes of vibration of the membrane.
Thus, the study of vibrating homogeneous membranes can be seen as a special case of the study of the spectral theory of the Laplace-Beltrami operator on Riemannian manifolds. This connection has led to a deep and fruitful interplay between mathematical physics and differential geometry, and has resulted in many important results and applications in both fields.\\
An orthonormal sequence of eigenfunctions in a Hilbert space is a sequence ${u_n}{n\in\mathbb{N}}$ such that each $u_n$ is an eigenfunction of a self-adjoint operator $A$ with eigenvalue $\lambda_n$, and ${u_n}{n\in\mathbb{N}}$ forms an orthonormal sequence. That is, we have:
$$Au_n=\lambda_nu_n,\quad \langle u_m,u_n\rangle = \delta_{m,n}$$\\
One benefit of having an orthonormal sequence of eigenfunctions in a Hilbert space is that it allows us to diagonalize the operator $A$. Specifically, any vector $f$ in the Hilbert space can be expanded in terms of the eigenfunctions ${u_n}_{n\in\mathbb{N}}$ using the Fourier expansion as:
$$f=\sum_{n=1}^{\infty}\langle f,u_n\rangle u_n$$
and we can apply the operator $A$ to $f$ as:
$$Af=\sum_{n=1}^{\infty}\langle f,u_n\rangle A u_n=\sum_{n=1}^{\infty}\lambda_n\langle f,u_n\rangle u_n$$
where we have used the fact that $u_n$ is an eigenfunction of $A$ with eigenvalue $\lambda_n$.\\ This means that $A$ is diagonalized by the orthonormal sequence of eigenfunctions, and we can easily compute the action of $A$ on any vector $f$ using its Fourier coefficients ${\langle f,u_n\rangle}_{n\in\mathbb{N}}$. Another benefit of having an orthonormal sequence of eigenfunctions is that it allows us to solve differential equations in terms of these functions. Specifically, if $A$ is a differential operator, then we can often find a sequence of orthonormal eigenfunctions of $A$ and use them to solve the differential equation. This is because the eigenfunctions form a complete basis for the Hilbert space, and any function in the Hilbert space can be expanded in terms of them.\\
Acknowledging $M$ as a vibrating homogeneous membrane with dedicated boundary, with its physical implications let $v: M \times(0, \infty) \rightarrow \mathbb{R}$ conforming the wave equation
$$
\mathfrak{D} v=\sqrt{(\rho / \tau) \partial^{2} v / \partial t^{2}},
$$
where $\rho$ is the density along with $\tau$ the tension of the membrane, with boundary constraint
$$
v \mid \partial M \times(0, \infty)=0
$$
Let
$$
v(x, t)=\chi(x) T(t) .
$$
Enunciating the existence of a constant $\sqrt{|{\lambda}|}$ we can obtain
\begin{eqnarray}
T^{\prime \prime}+(\sqrt{|{\lambda}|} \tau / \rho) T & =0, \\
\mathfrak{D} \chi^{\prime \prime}+\sqrt{|{\lambda}|} \chi & =0, \chi \mid \partial M=0 .
\end{eqnarray}
So $\sqrt{|{\lambda}|}$ is a Dirichlet 2-radical eigenvalue of$M$ with eigenfunction $\chi$. It is furnished $\sqrt{|{\lambda}|}>0$ with its physical implication dispensing $\sqrt{|{\lambda}|}>0$.
The general solution then is
$$
T(t)=A \cos \sqrt{\sqrt{|{\lambda}|} \tau / \rho}(t-\beta),
$$
where $A, \beta$ are arbitrary constants.
Since any admissible finite linear combination of solutions of the above form is again a solution,it is depicted as
$$
v(x, t)=\sum_{k=1}^{\infty} A_{k} \phi_{k}(x) \cos \sqrt{\sqrt{|{\lambda}|}_{k} \tau / \rho}\left(\mathrm{t}-\beta_{k}\right)
$$
where $\sqrt{|{\lambda}|}_{1} \leq \sqrt{|{\lambda}|}_{2} \leq \cdots$ are the Dirichlet 2-radical eigenvalues of$M, \phi_{1}, \phi_{2}, \ldots$ a complete orthonormal sequence in $L^{2}(M)$ with $\phi_{k}$ an eigenfunction of$\sqrt{|{\lambda}|}_{k}$ considering every $k$, along with $A_{k}, \beta_{k}$ are arbitrary constants. Project that at time $t=0$ it is relegated
$$
v(x, 0)={f}(x),(\partial v / \partial t)(x, 0)=0
$$
where ${f}$ is some furnished function on $M$, that is, we can examine the membrane as deformed to a \provided{} position along with then released (without being pushed) at time $t=0$. The second constraints of the above solution implies $v$ can be emphasized as
$$
v(x, t)=\sum_{k=1}^{\infty} A_{k} \phi_{k}(x) \cos \sqrt{\sqrt{|{\lambda}|}_{k} \tau / \rho} t
$$
which indicates
$$
{f}(x)=\sum_{k=1}^{\infty} A_{k} \phi_{k}(x),
$$
It can be also deduced
$$
A_{k}=\left({f}, \phi_{k}\right)
$$
implying,
$$
v(x, t)=\int_{M} w(x, y, t) {f}(y) d V(y)
$$
adjudicating
$$
w(x, y, t)=\sum_{k=1}^{\infty} \phi_{k}(x) \phi_{k}(y) \cos \sqrt{\sqrt{|{\lambda}|}_{k} \tau / \rho} t
$$
Analogously, if on considering of orchestration of heat diffusion pervading through the medium $M$ of homogeneous nature attached with concealed boundary, then the admissible temperature map $u: M \times[0, \infty) \rightarrow \mathbb{R}$ satisfies, after required normalization ofthe physical constants, the heat equation
$$
\mathfrak{D} u=|\sqrt{\partial u / \partial t}|
$$
with adjoined boundary constraints
$$
v_{P} u=0
$$
on all subsequent $\partial M \times(0, \infty)$.We search solutions of the form
$$
u(x, t)=\chi(x) T(t)
$$
along with are led to the equations
$$
T^{\prime}+\sqrt{|{\lambda}|} T=0, \mathfrak{D} \chi+\sqrt{|{\lambda}|} \chi=0,
$$
with boundary constraint
$$
v \chi=0 \text { on } \partial M .
$$
So $\sqrt{|{\lambda}|}$ is a Neumann 2-radical eigenvalue of$M$ with eigenfunction $\chi . T(t)$ is now of the form
$$
T(t)=A e^{-\sqrt{|{\lambda}|} t}
$$
If we wish to dispense the temperature function under the assumption
$$
u(x, 0)={f}(x)
$$
where ${f}$ is a \provided{} function on $M$, then the above arguments lead to
$$
u(x, t)=\int_{M} p(x, y, t) {f}(y) d V(y)
$$
where
$$
p(x, y, t)=\sum_{j=1}^{\infty} e^{-\sqrt{|{\lambda}|}_{j} t} \phi_{j}(x) \phi_{j}(y)
$$
The $\sqrt{|{\lambda}|}_{j}$ 's are Neumann 2-radical eigenvalues with eigenfunctions $\phi_{j}$.
\section{DEDUCTION OF 2-RADICAL LAPLACE OPERATOR VERSION OF RAYLEIGH AND MAX-MIN METHODS}
Considering continuous vector fields ${P}, {Q}$ on $M$, it is declared the inner product
$$
({P}, {Q})=\int_{M}\langle{P}, {Q}\rangle d V
$$
with norm
$$
\|{P}\|^{2}=\int_{M}|{P}|^{2} d V,
$$
along with complete metric space to an $L^{2}$-space, designated by $\mathcal{L}^{2}(M)$. If we bring forth a $C^{1}$ function ${f}$ on $M$, along with a $C^{1}$ vector field ${P}$ on $M$ with compact support, then it can be promptly deduced
$$
(\operatorname{grad} {f}, {P})=-({f}, \operatorname{div} {P})
$$
\\Distinguishing this formula considering a more explorative range of functions.
\begin{definition}[1]By projecting a function ${f} \in L^{2}(M)$ we say that ${Q} \in \mathcal{L}^{2}(M)$ is a weak derivative of${f}$ if
$$
({Q}, {P})=-({f}, \operatorname{div} {P})
$$
considering all $C^{1}$ vector fields ${P}$ with compact support on $M$.
\end{definition}
On $\mathcal{H}(M)$ on emphasizing the symmetric bilinear form, the Dirichlet or energy integral, is \provided{} by
$$
D[{f}, {h}]=(\operatorname{Grad} {f}, \operatorname{Grad} {h})
$$
considering ${f},{h} \in \mathcal{H}(M)$
On validation of the above formula
$$
(\mathfrak{D} \phi, {f})=-D[\phi, {f}] \text {, }
$$
where $\phi$ is  an eigenfunction in one of our 2-radical eigenvalue problems, along with ${f}$ is in some subspace of$\mathcal{H}(M)$.\\
In the closed 2-radical eigenvalue problem it is relegated $M$ compact (in particular $M=\overline{M}$ ) along with (78) is certainly valid by (36) when $\phi \in C^{2}(M)$ along with ${f} \in C^{\infty}(M)$. Accordingly, considering a dedicated $\phi \in C^{2}(M)$, formula $(78)$ defines a linear functional $F_{\phi}$ on $C^{\infty}(M)$ as a subspace of$\mathcal{H}(M)$, conforming
$$
\left|F_{\phi}({f})\right| \leq\|\operatorname{grad} \phi\|\|\operatorname{grad} {f}\| \leq\|\operatorname{grad} \phi\|\|{f}\|_{1} \text {. }
$$
So $F_{\phi}$ is a bounded linear functional on $C^{\infty}(M) \subseteq \mathcal{H}(M)$ with an associated  norm $\leq\|\operatorname{grad} \phi\|$, along with can be extended to a bounded linear functional on all of$\mathcal{H}(M)$. Thus the above-mentioned formula involving 2-radical Laplace operator will be valid considering $\phi \in C^{2}(M), {f} \in \mathcal{H}(M)$.\\
Considering the Neumann 2-radical eigenvalue problem we start with the validity of the concerned formula considering $\phi \in C^{2}(\overline{M})$, conforming $v \phi=0$ on $\partial M$, along with ${f} \in C^{\infty}(\overline{M})$-the appropriate 2-radical Green formula is the second one. The validity of the formula can be extended to allow ${f} \in \mathcal{H}(M)$.\\
Considering the Dirichlet 2-radical eigenvalue problem we proceed as follows: The relegated formula can be validated considering $\phi \in C^{2}(\overline{M})$, conforming $\phi=0$ on $\partial M$, along with ${f} \in C^{\infty}(M)$ with compact support-again by the second 2-radical Green formula. The validity of the concerned formula will now be extended to distinguish ${f}$ be in the completion of$C^{\infty}$ functions, with compact support in $M$, in $\mathcal{H}(M)$.\\
Considering the mixed 2-radical eigenvalue predicament we unpretentiously deduce the validity when $\phi \in$ $C^{2}(\overline{M})$ with $\phi=0$ on $\partial M-N, v \phi=0$ on $N$, along with ${f}$ is in the completion of functions in $C^{\infty}(\overline{M})$ compactly supported in $M \cup N$.
\begin{theorem}
Providing a normal domain with  dedicated 2-radical eigenvalue
$$
\sqrt{|{\lambda}|}_{1} \leq \sqrt{|{\lambda}|}_{2} \leq \cdots,
$$
where every 2-radical eigenvalue is under the same mathematical setting as before. Then considering any admissible ${f} \in 5(M), {f} \neq 0$, it is relegated that
$$
\sqrt{|{\lambda}|}_{1} \leq D[{f}, {f}] /\|{f}\|^{2}
$$
If $\left\{\phi_{1}, \phi_{2}, \ldots\right\}$ is a complete orthonormal basis of $L^{2}(M)$ such that $\phi_{j}$ is an eigenfunction of$\sqrt{|{\lambda}|}_{j}$ ranging $j=1,2, \ldots$, ${f} \in \mathfrak{H}(M), {f} \neq 0$  conforms
$$
\left({f}, \phi_{1}\right)=\cdots=\left({f}, \phi_{k-1}\right)=0,
$$
relegating an expression
$$
\sqrt{|{\lambda}|}_{k} \leq D[{f}, {f}] /\|{f}\|^{2}
$$
and  equality if ${f}$ is an eigenfunction of $\sqrt{|{\lambda}|}_{k}$.
\end{theorem}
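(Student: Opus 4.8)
The plan is to reduce the whole statement to the spectral expansion of $f$ in the orthonormal eigenbasis $\{\phi_j\}$ together with the identity $(\mathfrak{D}\phi,f)=-D[\phi,f]$ that was established above (for $\phi$ a $C^2$ eigenfunction and $f$ in the appropriate dense subspace of $\mathcal{H}(M)$). First I would record the algebraic consequence of the eigenvalue equation $\mathfrak{D}\phi_j+\sqrt{|\lambda|}_j\phi_j=0$ combined with that identity: for $f$ in the relevant subspace,
$$D[\phi_j,f]=-(\mathfrak{D}\phi_j,f)=\sqrt{|\lambda|}_j(\phi_j,f),$$
and in particular $D[\phi_i,\phi_j]=\sqrt{|\lambda|}_j\delta_{ij}$, so the $\phi_j$ are mutually orthogonal with respect to the energy form $D$ just as they are in $L^2(M)$.

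Next I would fix $f\in\mathcal{H}(M)$, $f\neq0$, put $a_j=(f,\phi_j)$, and work with the spectral partial sums $f_m=\sum_{j=1}^m a_j\phi_j$. Since $\{\phi_j\}$ is complete in $L^2(M)$ we have $f_m\to f$ in $L^2(M)$ and $\|f\|^2=\sum_{j=1}^\infty a_j^2$; by the energy-orthogonality from the first step, $D[f_m,f_m]=\sum_{j=1}^m\sqrt{|\lambda|}_j a_j^2$, which is nondecreasing in $m$ because each $\sqrt{|\lambda|}_j\ge0$. The key point is that this sequence converges to $D[f,f]$ (allowing the value $+\infty$, in which case the claimed inequality is vacuous). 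This is where membership of $f$ in $\mathcal{H}(M)$ — the completion of the smooth functions respecting the boundary condition of the problem at hand, under the norm $\|\cdot\|_1$ built from $D$ — is used: it guarantees $f_m\to f$ in that norm, hence $D[f_m,f_m]\to D[f,f]=\sum_{j=1}^\infty\sqrt{|\lambda|}_j a_j^2$.

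Granting the identity $D[f,f]=\sum_{j\ge1}\sqrt{|\lambda|}_j a_j^2$, the inequalities fall out immediately. For the first assertion, $D[f,f]\ge\sqrt{|\lambda|}_1\sum_j a_j^2=\sqrt{|\lambda|}_1\|f\|^2$. For the $k$-th assertion, the hypothesis $(f,\phi_1)=\cdots=(f,\phi_{k-1})=0$ says $a_1=\cdots=a_{k-1}=0$, so $D[f,f]=\sum_{j\ge k}\sqrt{|\lambda|}_j a_j^2\ge\sqrt{|\lambda|}_k\sum_{j\ge k}a_j^2=\sqrt{|\lambda|}_k\|f\|^2$ because $\sqrt{|\lambda|}_j\ge\sqrt{|\lambda|}_k$ for $j\ge k$. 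For the equality case, taking $f=\phi_k$ (which satisfies the orthogonality constraints by orthonormality of the basis) gives $D[\phi_k,\phi_k]=\sqrt{|\lambda|}_k=\sqrt{|\lambda|}_k\|\phi_k\|^2$; more generally, any eigenfunction of $\sqrt{|\lambda|}_k$ in $\mathcal{H}(M)$ satisfying the constraints has an expansion supported on indices $j$ with $\sqrt{|\lambda|}_j=\sqrt{|\lambda|}_k$, whence $D[f,f]=\sqrt{|\lambda|}_k\sum_j a_j^2=\sqrt{|\lambda|}_k\|f\|^2$.

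I expect the genuine work to be the middle step: justifying the termwise evaluation of $D[f,f]$ for a general $f\in\mathcal{H}(M)$, i.e. showing that the spectral partial sums converge to $f$ not merely in $L^2(M)$ but in the energy norm, and doing so uniformly across the four boundary-value versions (closed, Dirichlet, Neumann, mixed), each of which prescribes a different dense subspace of $\mathcal{H}(M)$ on which $(\mathfrak{D}\phi,f)=-D[\phi,f]$ was originally verified. A clean route is to approximate $f$ in $\|\cdot\|_1$ by smooth functions from that subspace, apply the already-known identity to those approximants, and pass to the limit using the boundedness estimate $|F_\phi(f)|\le\|\operatorname{grad}\phi\|\,\|f\|_1$ recorded earlier together with the monotonicity of the partial sums $D[f_m,f_m]$, which pins down the limit.
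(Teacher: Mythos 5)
Your outline is mathematically sound and reaches the right inequalities, but it routes through a stronger claim than the theorem needs, and in doing so creates the very difficulty you flag as ``the genuine work.'' You propose to establish the full Parseval-type identity $D[f,f]=\sum_j\sqrt{|\lambda|}_j\,a_j^2$ by showing the spectral partial sums converge to $f$ in the energy norm $\|\cdot\|_1$, and you correctly note this requires a nontrivial density/approximation argument adapted to each of the four boundary-value settings. The paper avoids this entirely by exploiting only the nonnegativity of the Dirichlet form. Writing $s_r=\sum_{j=k}^{r}\alpha_j\phi_j$, it expands
\begin{equation}
0\le D[f-s_r,\,f-s_r]=D[f,f]-2\sum_{j=k}^{r}\alpha_j D[f,\phi_j]+\sum_{j,l=k}^{r}\alpha_j\alpha_l D[\phi_j,\phi_l]=D[f,f]-\sum_{j=k}^{r}\sqrt{|\lambda|}_j\,\alpha_j^2,
\end{equation}
using exactly the identities $D[f,\phi_j]=\sqrt{|\lambda|}_j\alpha_j$ and $D[\phi_j,\phi_l]=\sqrt{|\lambda|}_j\delta_{jl}$ that you also derive. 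This yields the one-sided (Bessel-type) bound $D[f,f]\ge\sum_{j=k}^{r}\sqrt{|\lambda|}_j\,\alpha_j^2$ for every $r$, hence $D[f,f]\ge\sum_{j=k}^{\infty}\sqrt{|\lambda|}_j\,\alpha_j^2\ge\sqrt{|\lambda|}_k\sum_{j\ge k}\alpha_j^2=\sqrt{|\lambda|}_k\|f\|^2$, where only the $L^2$ Parseval identity (completeness of $\{\phi_j\}$ in $L^2(M)$) is invoked at the last step. No convergence in the energy norm is ever asserted or needed. So the two approaches share the same algebraic preliminaries, but the paper trades your equality for a cheap inequality obtained from positivity of $D$, which is why it requires no extra analysis. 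Your equality-case discussion is fine and essentially matches the paper's conclusion. If you want to keep your structure, you should at minimum replace the claim $D[f_m,f_m]\to D[f,f]$ by the weaker statement that $D[f,f]\ge\liminf_m D[f_m,f_m]$, which is immediate from $D[f-f_m,f-f_m]\ge0$ and already suffices.
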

\begin{proof}
If any admissible ${f} \in H(M)$ is considered we can set
$$
\alpha_{j}=\left({f}, \phi_{j}\right)
$$
If $k>1$, the above equation projects $\alpha_{1}=\cdots=\alpha_{k-1}=0$. Fixing  $k=1,2, \ldots$, along with $r=k, k+1, \ldots$ it is relegated that
\begin{eqnarray}
0\leq D\left[{f}-\sum_{j=k}^{r} \alpha_{j} \phi_{j}, {f}-\sum_{j=k}^{r} \alpha_{j} \phi_{j}\right] \\
=D[{f}, {f}]-2 \sum_{j=k}^{r} \alpha_{j} D\left[\hat{{f}}, \phi_{j}\right]+\sum_{j, l=k}^{r} \alpha_{j} \alpha_{l} D\left[\phi_{j}, \phi_{l}\right] \\
=D[{f}, {f}]+2 \sum_{j=k}^{r} \alpha_{j}\left({f}, \mathfrak{D} \phi_{j}\right)-\sum_{j, l=k}^{r} \alpha_{j} \alpha_{l}\left(\phi_{j}, \mathfrak{D} \phi_{l}\right) \\
=D[{f}, {f}]-\sum_{j=k}^{r} \sqrt{|{\lambda}|}_{j} \alpha_{j}^{2} .
\end{eqnarray}
concluding
$$
\sum_{j=k}^{\infty} \sqrt{|{\lambda}|}_{j} \alpha_{j}^{2}<+\infty
$$
along with
$$
D[{f}, {f}] \geq \sum_{j=k}^{\infty} \sqrt{|{\lambda}|}_{j} \alpha_{j}^{2} \geq \sqrt{|{\lambda}|}_{k} \sum_{j=k}^{\infty} \alpha_{j}^{2}=\sqrt{|{\lambda}|}_{k}\|{f}\|^{2},
$$
by  Parseval identities for 2-radical Laplace operator
\end{proof}
\begin{theorem}
Projecting $v_{1}, \ldots, v_{k-1} \in L^{2}(M)$, distinguish
$$
\mu= D[{f}, {f}] /\|{f}\|^{2}
$$
where $f$ ranges over the subspace (less the origin) of functions in $\mathcal{H}(M)$ orthogonal to $v_{1}, \ldots, v_{k-1}$ in $L^{2}(M)$. It is relegated
$$
\mu \leq \sqrt{|{\lambda}|}_{k} .
$$
Appertaining to the context, if $v_{1}, \ldots, v_{k-1}$ are orthonormal, with  $v_{l}$ an eigenfunction of$\sqrt{|{\lambda}|}_{l}, l=1, \ldots, k-1$, then $\mu=\sqrt{|{\lambda}|}_{k}$.
\end{theorem}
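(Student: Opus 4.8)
The plan is to run the classical Courant--Fischer (max--min) argument, transcribed to the Dirichlet integral $D[\cdot,\cdot]$ and the operator $\mathfrak{D}$, interpreting $\mu$ as the infimum of $D[{f},{f}]/\|{f}\|^{2}$ over the indicated subspace. The one ingredient I would isolate at the outset is the $D$-orthogonality of the complete orthonormal system $\{\phi_{j}\}$: from the identity $(\mathfrak{D}\phi,{f})=-D[\phi,{f}]$ established above (extended to ${f}\in\mathcal{H}(M)$ case by case in the discussion preceding the Rayleigh-type theorem) together with $\mathfrak{D}\phi_{j}=-\sqrt{|{\lambda}|}_{j}\phi_{j}$, one gets $D[\phi_{j},\phi_{l}]=\sqrt{|{\lambda}|}_{j}(\phi_{j},\phi_{l})=\sqrt{|{\lambda}|}_{j}\delta_{jl}$.

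\textbf{Step 1 (the bound $\mu\le\sqrt{|{\lambda}|}_{k}$).} I would work inside the $k$-dimensional subspace $V_{k}=\operatorname{span}\{\phi_{1},\dots,\phi_{k}\}\subseteq\mathcal{H}(M)$. For ${f}=\sum_{j=1}^{k}c_{j}\phi_{j}\in V_{k}$ the $k-1$ orthogonality requirements $({f},v_{1})=\cdots=({f},v_{k-1})=0$ are $k-1$ homogeneous linear equations in the $k$ unknowns $c_{1},\dots,c_{k}$, so there is a nonzero ${f}\in V_{k}$ lying in the admissible subspace. For that ${f}$, using $D$-orthogonality and Parseval, $D[{f},{f}]=\sum_{j=1}^{k}\sqrt{|{\lambda}|}_{j}c_{j}^{2}\le\sqrt{|{\lambda}|}_{k}\sum_{j=1}^{k}c_{j}^{2}=\sqrt{|{\lambda}|}_{k}\|{f}\|^{2}$, whence $D[{f},{f}]/\|{f}\|^{2}\le\sqrt{|{\lambda}|}_{k}$ and therefore $\mu\le\sqrt{|{\lambda}|}_{k}$.

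\textbf{Step 2 (equality when $v_{l}=\phi_{l}$).} If $v_{1},\dots,v_{k-1}$ are orthonormal with $v_{l}$ an eigenfunction of $\sqrt{|{\lambda}|}_{l}$, then after relabeling the admissible subspace is exactly $\{{f}\in\mathcal{H}(M):({f},\phi_{1})=\cdots=({f},\phi_{k-1})=0\}$, and the Rayleigh-type theorem proved just above applies verbatim to give $D[{f},{f}]/\|{f}\|^{2}\ge\sqrt{|{\lambda}|}_{k}$ for every nonzero admissible ${f}$, so $\mu\ge\sqrt{|{\lambda}|}_{k}$. Combined with Step 1 this yields $\mu=\sqrt{|{\lambda}|}_{k}$; moreover $\phi_{k}$ is itself admissible and realizes the value since $D[\phi_{k},\phi_{k}]=\sqrt{|{\lambda}|}_{k}=\sqrt{|{\lambda}|}_{k}\|\phi_{k}\|^{2}$.

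\textbf{Expected main obstacle.} The only point that is not purely formal is the counting argument in Step 1: one must be certain that imposing $k-1$ linear constraints on the $k$-dimensional space $V_{k}$ always leaves a nonzero vector, and that this vector genuinely lies in $\mathcal{H}(M)$ — which holds because each $\phi_{j}$ is a smooth eigenfunction, hence in $\mathcal{H}(M)$, a linear space. A secondary technical dependency is that every manipulation of $D$ here rests on the extension of $(\mathfrak{D}\phi,{f})=-D[\phi,{f}]$ from smooth ${f}$ to ${f}\in\mathcal{H}(M)$; I would simply cite the case-by-case extension carried out earlier for the four boundary-value problems. No compactness or existence-of-minimizer argument is needed, since the statement asserts only an inequality in general and, in the equality case, an explicit minimizer ($\phi_{k}$) is exhibited.
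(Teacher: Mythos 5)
Your argument is correct and follows the same route as the paper: choose a nonzero $f=\sum_{j=1}^{k}\alpha_{j}\phi_{j}$ in $\operatorname{span}\{\phi_{1},\dots,\phi_{k}\}$ by the underdetermined-linear-system count, then use $D[\phi_{j},\phi_{l}]=\sqrt{|\lambda|}_{j}\delta_{jl}$ and Parseval to get $D[f,f]=\sum_{j=1}^{k}\sqrt{|\lambda|}_{j}\alpha_{j}^{2}\le\sqrt{|\lambda|}_{k}\|f\|^{2}$. Your Step 2, explicitly closing the equality case via the preceding Rayleigh-type theorem with $\phi_{k}$ as witness, fills in a detail the paper's proof leaves implicit.
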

\begin{proof}
Appertaining
$$
{f}=\sum_{j=1}^{k} \alpha_{j} \phi_{j} \text {,}
$$
where $\phi_{1}, \ldots, \phi_{k}$ are orthonormal, ${f}$ is orthogonal to $v_{1}, \ldots, v_{k-1}$ in $L^{2}(M)$, implying
$$
0=\sum_{j=1}^{k} \alpha_{j}\left(\phi_{j}, v_{j}\right), l=1, \ldots, k-1
$$
Let $\alpha_{1}, \ldots, \alpha_{k}$ are unknowns $\left(\phi_{j}, v_{l}\right)$ as \provided{} coefficients, then system of equations disseminates more unknowns than equations along with a significant solution of above equations must exist. But
$$
\mu\|{f}\|^{2} \leq D[{f}, {f}]=\sum_{j=1}^{k} \sqrt{|{\lambda}|}_{j} \alpha_{j}^{2} \leq \sqrt{|{\lambda}|}_{k}\|{f}\|^{2}
$$
relegating the claim.
\end{proof}
\begin{theorem}
It is considered $\Omega_{1}, \ldots, \Omega_{m}$ be pairwise disjoint normal domains in $M$, whose boundaries intersect transversally. Projecting an 2-radical eigenvalue problem on $M$, distinguish every $r=1, \ldots, m$, the 2-radical eigenvalue problem on $\Omega_{r}$ extracted by requiring vanishing Dirichlet data on $\partial \Omega_{r} \cap M$ discarding the original data on $\partial \Omega_{r} \cap \partial M$ unmodified. As 2-radical eigenvalues of$\Omega_{1}, \ldots, \Omega_{m}$ is in an increasing sequence
$$
0 \leq v_{1} \leq v_{2} \leq \cdots
$$
relegating
$$
\sqrt{|{\lambda}|}_{k} \leq v_{k} .
$$
\end{theorem}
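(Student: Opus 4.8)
The plan is to combine the zero-extension construction with the Rayleigh-type characterization of $\sqrt{|\lambda|}_k$ established above. Let $\psi_1,\psi_2,\ldots$ be an orthonormal system in $\bigoplus_r L^2(\Omega_r)$ consisting of eigenfunctions of the decomposed problems, so that each $\psi_i$ is supported in a single $\Omega_{r(i)}$, satisfies $\mathfrak{D}\psi_i+\nu_i\psi_i=0$ there together with the prescribed data ($\psi_i=0$ on $\partial\Omega_{r(i)}\cap M$ and the original condition on $\partial\Omega_{r(i)}\cap\partial M$), and the pooled sequence is $\nu_1\le\nu_2\le\cdots$. Fix $k$, let $\hat\psi_i$ denote the extension of $\psi_i$ by zero to all of $M$, and set $V=\operatorname{span}\{\hat\psi_1,\ldots,\hat\psi_k\}$.

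First I would check that $V\subseteq\mathcal{H}(M)$ with $\dim V=k$. Since $\psi_i$ vanishes on the interior portion $\partial\Omega_{r(i)}\cap M$ of the boundary of its supporting domain, and the $\Omega_r$ are normal domains with transversally meeting boundaries, the zero-extension $\hat\psi_i$ possesses the weak gradient $\operatorname{grad}\psi_i$ extended by zero and lies in the completion defining $\mathcal{H}(M)$ for whichever of the four 2-radical eigenvalue problems is in force on $M$; the data kept on $\partial\Omega_{r(i)}\cap\partial M$ is exactly the data $M$ carries there, so no incompatibility arises. Because the $\psi_i$ are $L^2$-orthonormal and zero-extension preserves $L^2$ inner products, the $\hat\psi_i$ remain orthonormal, hence independent, in $L^2(M)$.

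Next, by the same linear-algebra count used in the max-min theorem above, the $k-1$ linear conditions $({f},\phi_j)=0$ for $j=1,\ldots,k-1$, where $\phi_1,\phi_2,\ldots$ is a complete orthonormal system of eigenfunctions of the problem on $M$, admit a nontrivial solution ${f}=\sum_{i=1}^k c_i\hat\psi_i\in V$, ${f}\neq0$. For this ${f}$ I would evaluate the Rayleigh quotient: when $i,j$ lie in different domains the supports of $\hat\psi_i,\hat\psi_j$ are disjoint, so $D[\hat\psi_i,\hat\psi_j]=0$; when they lie in the same $\Omega_r$, the validity of $(\mathfrak{D}\phi,{f})=-D[\phi,{f}]$ extended to $\mathcal{H}(\Omega_r)$ gives $D[\hat\psi_i,\hat\psi_j]=D_{\Omega_r}[\psi_i,\psi_j]=-(\psi_i,\mathfrak{D}\psi_j)=\nu_j(\psi_i,\psi_j)=\nu_j\delta_{ij}$. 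Hence $D[{f},{f}]=\sum_{i=1}^k\nu_i c_i^2$ and $\|{f}\|^2=\sum_{i=1}^k c_i^2$, so
$$
\frac{D[{f},{f}]}{\|{f}\|^2}=\frac{\sum_{i=1}^k\nu_i c_i^2}{\sum_{i=1}^k c_i^2}\le\nu_k .
$$
Since ${f}\in\mathcal{H}(M)$ is orthogonal to $\phi_1,\ldots,\phi_{k-1}$, the Rayleigh-type theorem proved above yields $\sqrt{|{\lambda}|}_k\le D[{f},{f}]/\|{f}\|^2\le\nu_k$, which is the claim.

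The step I expect to be the genuine obstacle is the membership $\hat\psi_i\in\mathcal{H}(M)$: one must verify that extending a Dirichlet eigenfunction of $\Omega_r$ by zero does not destroy membership in the relevant Sobolev-type completion, and it is precisely the transversality of the $\partial\Omega_r$ (together with their being normal domains) that rules out pathological behaviour along the cut set $\bigcup_r(\partial\Omega_r\cap M)$; the remaining steps are the routine block-diagonal bookkeeping of $D[\cdot,\cdot]$ across disjoint supports and the dimension count, both of which mirror arguments already carried out.
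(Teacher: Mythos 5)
Your proposal is correct and follows essentially the same route as the paper's own argument: zero-extend eigenfunctions $\psi_1,\ldots,\psi_k$ of the subdomain problems to $M$, use the dimension count to find a nontrivial combination $f=\sum_j\alpha_j\psi_j$ orthogonal to $\phi_1,\ldots,\phi_{k-1}$, and sandwich $\sqrt{|\lambda|}_k\|f\|^2\le D[f,f]=\sum_j\nu_j\alpha_j^2\le\nu_k\|f\|^2$. You supply more explicit bookkeeping (the block-diagonal computation of $D[\hat\psi_i,\hat\psi_j]$ and the membership check $\hat\psi_i\in\mathcal{H}(M)$), which the paper passes over quickly, but the underlying idea is identical.
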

\begin{proof}
Considering $j=1, \ldots, k$ to distinguish $\psi_{j}: \overline{M} \rightarrow \mathbb{R}$ as an eigenfunction of $v_{j}$ when restricted to the appropriate subdomain, but identically zero, otherwise. Then $\psi_{j} \in \mathfrak{H}(M)$, along with $\psi_{1}, \ldots, \psi_{k}$ may be chosen orthonormal in $L^{2}(M)$. Existing $\alpha_{1}, \ldots, \alpha_{k}$, not all equal to zero, conforming
$$
\sum_{j=1}^{k} \alpha_{j}\left(\psi_{j}, \phi_{l}\right)=0, l=1, \ldots, k-1
$$
Accordingly,
$$
{f}=\sum_{j=1}^{k} \alpha_{j} \psi_{j}
$$
is orthogonal to $\phi_{1}, \ldots, \phi_{k-1}$ implying
$$
\sqrt{|{\lambda}|}_{k}\|{f}\|^{2} \leq D[{f}, {f}]=\sum_{j=1}^{k} v_{j} \alpha_{j}^{2} \leq v_{k}\|{f}\|^{2}
$$
\end{proof}
In partial differential equations, the Neumann boundary condition[1] is a type of boundary condition that specifies the behavior of the normal derivative of a function on the boundary of a domain. In particular, the vanishing Neumann data condition refers to the Neumann boundary condition where the normal derivative of the function is required to vanish on the boundary.
More formally, let $\Omega$ be a bounded domain in $R^n$ with smooth boundary $\partial\Omega$, and let $u:$ $\Omega$ ->$R$ be a function that satisfies a second-order linear partial differential equation of the form
$Lu = f$
where $L$ is a second-order linear differential operator with constant coefficients, and $f$ is a given function on $\Omega$. The Neumann boundary condition for u is given by
$$
\partial{u}/\partial{n} = g
$$
where $\partial/\partial{n}$ is the normal derivative with respect to the outer unit normal vector on the boundary, and g is a given function on  $\partial\Omega$.The vanishing Neumann data condition occurs when $g = 0$, which means that the normal derivative of the function u vanishes on the boundary. This condition is often used in the study of elliptic partial differential equations, where it is useful for showing the existence and uniqueness of solutions, as well as for proving regularity and stability properties.\\
For example, the Dirichlet problem for the Laplace equation on a bounded domain $\Omega$ with smooth boundary $\partial\Omega$ consists of finding a harmonic function $u:\Omega ->R$ that satisfies the boundary condition $u|_{\partial\Omega} = f$, where f is a given function on the boundary. If the boundary condition is changed to the vanishing Neumann data condition $\partial{u}/\partial{n} = 0$, then the Dirichlet problem has a unique solution, up to a constant. This result is known as the Dirichlet principle, and it is a fundamental result in the theory of elliptic partial differential equations.\\
In differential geometry, the closure of a manifold M is the closure of its underlying set in the ambient space in which it is embedded. More precisely, let M be a manifold embedded in some topological space X. The closure of M, denoted by $\overline{M}$, is the smallest closed subset of X that contains M.
The closure of M can be defined explicitly as the union of M and its limit points in X. A point x in X is said to be a limit point of M if every neighborhood of x intersects M in a point different from x itself.\\
In the case where M is a closed subset of X, its closure is itself, i.e., $\overline{M} = M$. However, in general, the closure of M may contain points that are not in M, and these points are called boundary points of M.\\
The boundary of M is defined as the set of all boundary points of M, and is denoted by $\partial M$. In some cases, the boundary of M can be empty, in which case M is said to be a closed manifold.\\
The closure of a manifold can also be used to study the behavior of maps between manifolds. For example, suppose that $f: M -> N$ is a continuous map between two smooth manifolds, and suppose that the image of f is contained in the closure of N. Then we can extend f to a continuous map $f': M -> closure(N)$ by setting $f'(x) = lim_{y -> x} f(y)$ for x in the closure of M. This extension is well-defined because the closure of N is a closed set, and it allows us to study the behavior of f near the boundary of N by studying its behavior in the closure.\\
The concept of closure is important in differential geometry and topology, as it allows us to study the behavior of manifolds near their boundaries, and to define various topological invariants such as the homology and cohomology groups of a manifold. It also plays an important role in the study of partial differential equations on manifolds, where the behavior of solutions near the boundary is often crucial for understanding their global properties.
Let $M$ be a smooth manifold and let $N$ be a smooth submanifold of $M$ of dimension $n$ (here a closure of $M$). Suppose that $T: C^\infty(M) \rightarrow C^\infty(M)$ is a linear differential operator. We can restrict the action of $T$ to functions on $N$ by defining $T_N: C^\infty(N) \rightarrow C^\infty(N)$ as the composition of $T$ with the inclusion map $i_N: C^\infty(N) \hookrightarrow C^\infty(M)$.\\
The eigenvalues of $T_N$ are the values $\lambda \in \mathbb{C}$ such that there exists a non-zero smooth function $f \in C^\infty(N)$ such that $T_N(f) = \lambda f$.\\
However, in some cases, it may be more useful to consider the closure of $T_N$ with respect to a suitable topology. For example, we may consider the Sobolev space $H^k(N)$, which consists of all functions on $N$ whose first $k$ derivatives are square-integrable. The closure of $T_N$ in the Sobolev norm is denoted by $\overline{T}_N: H^k(N) \rightarrow H^{-k}(N)$, where $H^{-k}(N)$ is the dual space of $H^k(N)$. We can then define the eigenvalues of $\overline{T}_N$ in the same way as for finite-dimensional operators, i.e., as the values $\lambda \in \mathbb{C}$ such that there exists a non-zero function $f \in H^k(N)$ such that $\overline{T}_N(f) = \lambda f$.\\
The eigenvalues of $\overline{T}_N$ can provide information about the geometry of the submanifold $N$ within the larger manifold $M$. For example, if $N$ is a closed Riemannian submanifold of $M$, then the eigenvalues of the Laplace-Beltrami operator on $N$ are related to the curvature of $N$ and to the spectrum of the Laplace-Beltrami operator on $M$.\\
In addition, the eigenvalues of $\overline{T}_N$ can also provide information about the behavior of $T$ on functions that are not necessarily supported on $N$. For example, if $T$ is the Laplace-Beltrami operator on $M$ and $N$ is a compact submanifold of $M$, then the eigenvalues of $\overline{T}_N$ can be used to study the asymptotic behavior of the Laplace-Beltrami operator on $M$ as the distance to $N$ tends to infinity.\\
In conclusion, the eigenvalues of a linear differential operator on a submanifold $N$ of a larger manifold $M$ can be defined in terms of the closure of the space of smooth functions on $N$ with respect to a suitable topology, such as the Sobolev norm. The eigenvalues can provide important geometric and spectral information about the submanifold $N$ and about the behavior of the operator on functions that are not necessarily supported on $N$.\\
\begin{theorem}(vanishing Neumann data)
It can be projected
$$
\overline{M}=\overline{\Omega_{1}} \cup \cdots \cup \overline{\Omega_{m}} .
$$
Attaching each specific $r=1, \ldots, m$, add Neumann data to $\partial \Omega_{r} \cap M$ leaving original data on $\partial \Omega_{r} \cap \partial M$ unchanged on the same mathematical settings
$$
0 \leq \mu_{1} \leq \mu_{2} \leq \cdots .
$$
Then considering every $k=1,2, \ldots$ it is relegated
$$
\mu_{k} \leq \sqrt{|{\lambda}|}_{k} .
$$
\end{theorem}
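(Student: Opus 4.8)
The plan is to obtain the bound as the ``dual'' companion of the vanishing Dirichlet data theorem (Theorem 5): there one uses eigenfunctions of the subdivided problem, extended by zero, as trial functions on $M$; here one uses eigenfunctions of the original problem, regarded on the subdivided configuration, as trial functions for the Neumann-partitioned problem. The starting point is to record the variational structure of the subdivided problem. Let $\mathcal{H}'(\overline{M})$ be the form domain of the problem with Neumann data adjoined on the interfaces $\bigcup_r(\partial\Omega_r\cap M)$: it consists of the functions that, restricted to each $\Omega_r$, lie in the appropriate completion carrying the original data on $\partial\Omega_r\cap\partial M$, with \emph{no} matching condition imposed across the interfaces. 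Because the $\overline{\Omega_r}$ cover $\overline{M}$ and the interfaces are transversally intersecting hypersurfaces, hence of $dV$-measure zero, restriction gives an inclusion $\mathcal{H}(M)\subseteq\mathcal{H}'(\overline{M})$, and for $f\in\mathcal{H}(M)$ the energy integral splits as $D[f,f]=\sum_{r=1}^m\int_{\Omega_r}|\operatorname{grad} f|^2\,dV=:D'[f,f]$. Applying the basic spectral theorem (Theorem 1) to each $\Omega_r$ and taking the direct sum of the spectra, the subdivided problem has discrete 2-radical eigenvalues $0\le\mu_1\le\mu_2\le\cdots$ with eigenfunctions $\psi_1,\psi_2,\dots$ that may be taken orthonormal and complete in $L^2(M)$, so the max--min theorem (Theorem 4) applies to it with test space $\mathcal{H}'(\overline{M})$.

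With this in place I would fix $k$ and take eigenfunctions $\phi_1,\dots,\phi_k$ of the \emph{original} problem belonging to $\sqrt{|{\lambda}|}_1,\dots,\sqrt{|{\lambda}|}_k$, orthonormal in $L^2(M)$. Since $\phi_1,\dots,\phi_k\in\mathcal{H}(M)\subseteq\mathcal{H}'(\overline{M})$, every combination $f=\sum_{j=1}^k\alpha_j\phi_j$ is admissible for the subdivided Rayleigh quotient. The $k-1$ homogeneous conditions $\sum_{j=1}^k\alpha_j(\phi_j,\psi_l)=0$, $l=1,\dots,k-1$, are fewer than the $k$ unknowns, so there is $(\alpha_1,\dots,\alpha_k)\ne 0$ for which $f$ is $L^2(M)$-orthogonal to $\psi_1,\dots,\psi_{k-1}$. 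The max--min theorem for the subdivided problem then gives $\mu_k\le D'[f,f]/\|f\|^2$.

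It remains to estimate the quotient. Since $f\in\mathcal{H}(M)$ the splitting of the energy gives $D'[f,f]=D[f,f]$; and from $(\mathfrak{D}\phi_j,\phi_l)=-D[\phi_j,\phi_l]$ together with $\mathfrak{D}\phi_j+\sqrt{|{\lambda}|}_j\phi_j=0$ and orthonormality one gets $D[\phi_j,\phi_l]=\sqrt{|{\lambda}|}_j\delta_{jl}$, whence $D'[f,f]=\sum_{j=1}^k\sqrt{|{\lambda}|}_j\alpha_j^2\le\sqrt{|{\lambda}|}_k\sum_{j=1}^k\alpha_j^2=\sqrt{|{\lambda}|}_k\|f\|^2$. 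Combining, $\mu_k\le\sqrt{|{\lambda}|}_k$, which is the assertion; taking $m=1$ (no interfaces) the two problems coincide and the inequality is an equality, a useful consistency check.

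The step I expect to be the main obstacle is the first one: justifying $\mathcal{H}(M)\subseteq\mathcal{H}'(\overline{M})$ with the energy preserved, and confirming that the subdivided problem really is a legitimate 2-radical eigenvalue problem to which Theorems 1 and 4 apply. This amounts to a trace/regularity check at the transversally intersecting interfaces (a function admissible on $M$ restricts to an admissible function on each piece), to verifying that these interfaces carry no $dV$-mass so that $D$ genuinely decomposes as a finite sum over the $\Omega_r$, and to checking that the Neumann conditions newly imposed on $\partial\Omega_r\cap M$ are the \emph{natural} (free) boundary conditions for the form $D'$, so that passing between the operator and variational formulations loses nothing. Once this bookkeeping is secured, the remainder is the short linear-algebra-plus-Rayleigh argument above, mirroring the proof of the vanishing Dirichlet data theorem with the roles of the two problems interchanged.
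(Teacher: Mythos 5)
Your proof is correct and follows essentially the same route as the paper: extend the subdivided Neumann eigenfunctions by zero, observe that any $f\in\mathcal{H}(M)$ is admissible for the subdivided problem with the same energy integral, pick a nontrivial $f=\sum_{j=1}^k\alpha_j\phi_j$ orthogonal to $\Psi_1,\dots,\Psi_{k-1}$, and sandwich $D[f,f]$ between $\mu_k\|f\|^2$ (Rayleigh/max--min for the subdivided problem) and $\sqrt{|\lambda|}_k\|f\|^2$ (eigenfunction expansion of $D$). Your extra care in spelling out the form domain $\mathcal{H}'(\overline{M})$ and the energy splitting across the interfaces is exactly the bookkeeping the paper's terser proof leaves implicit.
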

\begin{proof}
It is considered $\Psi_{l}: \overline{M} \rightarrow \mathbb{R}$ be the eigenfunction of$\mu_{l}$ when $\Psi_{l}$ is restricted to the appropriate subdomain, along with distinguish $\Psi_{l}$ be identically zero otherwise.\\
Now if ${f}$ is any admissible function in $\mathfrak{}(M)$, then ${f} \in \mathfrak{G}\left(\Omega_{r}\right)$ considering every $r=1, \ldots, m$. We can therefore argue that if ${f}$ is orthogonal to $\Psi_{1}, \ldots, \Psi_{k-1}$ in $L^{2}(M)$ then
$$
D[\hat{{f}}, {f}]=\sum_{r=1}^{m} \int_{\Omega_{r}}\|\operatorname{Grad} {f}\|^{2} d V \geq \sum_{r=1}^{m} \mu_{k} \iint_{\Omega_{r}} \dot{\mathrm{F}}^{2} d V=\mu_{k}\|{f}\|^{2} .
$$
But there prevails a significant
$$
{f}=\sum_{j=1}^{k} \alpha_{j} \phi_{j}
$$
orthogonal to $\Psi_{1}, \ldots, \Psi_{k-1}$ in $L^{2}(M)$. Then
$$
D[{f}, {f}] \leq \sqrt{|{\lambda}|}_{k}\|{f}\|^{2},
$$
which implies the claim.
\end{proof}
\begin{definition}
It is considered ${f}: M \rightarrow \mathbb{R} \in C^{0}$. Then the nodal set of${f}$ is the set ${f}^{-1}[0]$, along with a nodal domain of${f}$ is a component on $\overline{M} \backslash {f}^{-1}[0]$.
\end{definition}
\begin{theorem}
Consider our list of afore-mentioned 2-radical eigenvalues along with $\left\{\phi_{1}, \phi_{2}, \ldots\right\}$ as  a complete orthonormal basis of$L^{2}(M)$ with every $\phi_{j}$ an eigenfunction of$\sqrt{|{\lambda}|}_{j}, j=1,2, \ldots$. Then the number of nodal domains of$\phi_{k}$ is less than or equal to $k$, considering every $k=1,2, \ldots$
\end{theorem}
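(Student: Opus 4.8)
The plan is to transcribe the classical Courant nodal-domain argument to $\mathfrak{D}$ and argue by contradiction, using only the 2-radical Green formula and the Rayleigh (min-max) characterization proved above. Suppose $\phi_{k}$ had at least $k+1$ nodal domains; pick $k+1$ of them, say $D_{1},\dots,D_{k+1}$, and for each $j$ set $\psi_{j}=\phi_{k}$ on $D_{j}$ and $\psi_{j}=0$ on $\overline{M}\setminus D_{j}$. Because $\phi_{k}$ vanishes on the interior portion $\partial D_{j}\cap M$ of the boundary of the nodal domain and already obeys the prescribed boundary data on $\partial D_{j}\cap\partial M$, each $\psi_{j}$ belongs to $\mathfrak{H}(M)$; the $\psi_{j}$ have essentially disjoint supports, overlapping only in the measure-zero nodal set $\phi_{k}^{-1}[0]$, and they are linearly independent.

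First I would manufacture a test function forced to vanish on an open set. Seek $f=\sum_{j=1}^{k+1}c_{j}\psi_{j}$ subject to the $k-1$ orthogonality relations $(f,\phi_{1})=\dots=(f,\phi_{k-1})=0$ and the one extra relation $c_{k+1}=0$: a homogeneous linear system of $k$ equations in $k+1$ unknowns, hence admitting a nontrivial solution, for which necessarily $(c_{1},\dots,c_{k})\neq 0$. Thus $f\not\equiv 0$, $f$ is supported in $D_{1}\cup\dots\cup D_{k}$, and in particular $f$ vanishes identically on the nonempty open set $D_{k+1}$.

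Next I would evaluate the Rayleigh quotient of $f$ in two ways. Since $f\perp\phi_{1},\dots,\phi_{k-1}$ in $L^{2}(M)$, the Rayleigh characterization gives $D[f,f]\ge\sqrt{|{\lambda}|}_{k}\,\|f\|^{2}$. In the other direction, disjointness of supports yields $D[f,f]=\sum_{j=1}^{k}c_{j}^{2}\int_{D_{j}}|\operatorname{grad}\phi_{k}|^{2}\,dV$ and $\|f\|^{2}=\sum_{j=1}^{k}c_{j}^{2}\int_{D_{j}}\phi_{k}^{2}\,dV$; applying the 2-radical Green formula with $h=f=\phi_{k}$ on the domain $D_{j}$, whose boundary term $\int_{\partial D_{j}}\phi_{k}(v\phi_{k})\,dA$ vanishes ($\phi_{k}=0$ on $\partial D_{j}\cap M$, and the original data kill it on $\partial D_{j}\cap\partial M$), and using $\mathfrak{D}\phi_{k}=-\sqrt{|{\lambda}|}_{k}\phi_{k}$, gives $\int_{D_{j}}|\operatorname{grad}\phi_{k}|^{2}\,dV=\sqrt{|{\lambda}|}_{k}\int_{D_{j}}\phi_{k}^{2}\,dV$ for each $j$, and hence $D[f,f]=\sqrt{|{\lambda}|}_{k}\|f\|^{2}$. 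So equality holds in the Rayleigh inequality; retracing its proof, this can happen only if every Fourier coefficient $(f,\phi_{j})$ with $\sqrt{|{\lambda}|}_{j}>\sqrt{|{\lambda}|}_{k}$ vanishes, which leaves $f$ a finite combination of eigenfunctions at level $\sqrt{|{\lambda}|}_{k}$ — that is, $\mathfrak{D}f+\sqrt{|{\lambda}|}_{k}f=0$ on $M$. By the elliptic regularity already used for the 2-radical eigenvalue problem, $f$ is then smooth; but a nonzero eigenfunction cannot vanish on the nonempty open set $D_{k+1}$, a contradiction. Therefore $\phi_{k}$ has at most $k$ nodal domains.

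The step I expect to be the genuine obstacle is the last one — the unique-continuation principle guaranteeing that an eigenfunction vanishing on an open set is identically zero. For the honest Laplace--Beltrami operator this is Aronszajn's theorem, but $\mathfrak{D}$ is only posited to be a linear elliptic operator on suitable admissible functions, so I would first have to delimit precisely the class on which $\mathfrak{D}\phi+\sqrt{|{\lambda}|}\phi=0$ is a bona fide second-order elliptic equation with (at least) Lipschitz leading coefficients, and then invoke the Aronszajn--Cordes unique-continuation theorem there (for a real-analytic metric it is automatic, $f$ being real-analytic). A smaller, but still real, technical point is that a nodal domain $D_{j}$ need only have piecewise-smooth boundary — the nodal set may meet $\partial M$ transversally — so the Green formula on $D_{j}$ should be obtained by exhausting $D_{j}$ from within by smooth normal subdomains and passing to the limit, using $\phi_{k}\in C^{1}(\overline{M})$.
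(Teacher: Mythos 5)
Your proposal reproduces the paper's case (i) argument essentially verbatim: restrict $\phi_{k}$ to $k$ of its nodal domains, choose a nontrivial combination $f=\sum_{j}\alpha_{j}\psi_{j}$ orthogonal to $\phi_{1},\dots,\phi_{k-1}$, squeeze the Rayleigh quotient to equal $\sqrt{|\lambda|}_{k}$ via the 2-radical Green formula on each nodal domain, and conclude that $f$ is an eigenfunction of $\sqrt{|\lambda|}_{k}$ vanishing identically on the unused $(k{+}1)$st nodal domain, a contradiction. You are if anything more careful than the paper at the final step: where the paper appeals to the ``maximum principle'' to kill an eigenfunction vanishing on an open set, you correctly identify the needed ingredient as an Aronszajn-type unique continuation theorem (the strong maximum principle alone does not do this for a sign-changing $f$), and you also flag the piecewise-smooth-boundary issue for nodal domains, which the paper dodges by splitting into a ``normal domain'' case (i) and a case (ii) it does not actually carry out.
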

We will give proof  considering two distinct cases: (i) All the nodal domains of$\phi_{k}$ are normal domains; along with (ii) no assumption is made on the nodal domains, but then we only distinguish the closed along with Dirichlet 2-radical eigenvalue problems.\\
In case (i) we argue as follows. It is considered $G_{1}, \ldots, G_{k}, G_{k+1}, \ldots$ be nodal domains of$\phi_{k}$. Considering each specificevery $j=1, \ldots, k$ it is declared
$$
\psi_{j}= \begin{cases}\phi_{k} \mid G_{j} & \text { on } G_{j} \\ 0 & \text { on } \overline{M}-G_{j}\end{cases}
$$
the existence of a significant function is obtained
$$
{f}=\sum_{j=1}^{k} \alpha_{j} \psi_{j}
$$
conforming
$$
0=\left({f}, \phi_{1}\right)=\cdots=\left({f}, \phi_{k-1}\right) .
$$
One verifies that $\psi_{j} \in \mathfrak{F}(M)$ considering every $j=1, \ldots, k$. Then it can be implied
$$
\sqrt{|{\lambda}|}_{k} \leq D[{f}, {f}] /\|{f}\|^{2} \leq \sqrt{|{\lambda}|}_{k} .
$$
So ${f}$ is therefore, an eigenfunction of$\sqrt{|{\lambda}|}_{k}$ vanishing identically on $G_{k+1}$. But then the maximum principle (cf. Section XII.11) implies that ${f}$ vanishes identically on $M$ a contradiction.\\
Before proceeding to case (ii) we first note an immediate consequence of the theorem.
\begin{corollary}
$\phi_{1}$ always disseminates constant sign; $\sqrt{|{\lambda}|}_{1}$ disseminates multiplicity equal to 1 ; along with $\phi_{2}$ disseminates precisely 2 nodal domains. $\sqrt{|{\lambda}|}_{1}$ is characterized as being the only 2-radical eigenvalue with eigenfunction of constant sign.
\end{corollary}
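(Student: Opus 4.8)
The plan is to extract the entire corollary from the nodal-domain theorem just established (the number of nodal domains of $\phi_{k}$ is $\le k$), combined with the $L^{2}(M)$-orthogonality of eigenspaces belonging to distinct $2$-radical eigenvalues and the strong maximum principle for the elliptic operator $\mathfrak{D}$ that was invoked in that proof. First I would apply the nodal-domain theorem with $k=1$: $\phi_{1}$ has at most one nodal domain, and since an eigenfunction is not identically zero it has exactly one, so $\overline{M}\setminus\phi_{1}^{-1}[0]$ is connected; on this connected open set $\phi_{1}$ is continuous and nowhere zero, hence of one sign, and after replacing $\phi_{1}$ by $-\phi_{1}$ if necessary we may assume $\phi_{1}\ge 0$ on $\overline{M}$.

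Next I would prove that $\sqrt{|{\lambda}|}_{1}$ has multiplicity $1$. Let $\phi$ be any eigenfunction of $\sqrt{|{\lambda}|}_{1}$. It is again a first eigenfunction, so the previous step shows $\phi$ has constant sign, say $\phi\ge 0$. If $\phi$ and $\phi_{1}$ were linearly independent, one could choose them orthogonal in $L^{2}(M)$, giving $\int_{M}\phi\,\phi_{1}\,dV=0$ with both factors $\ge 0$; but the strong maximum principle (the input cited as Section XII.11 in the nodal-domain proof) forces a nontrivial nonnegative eigenfunction to be strictly positive on the interior of $M$, so the integrand is positive there and the integral cannot vanish. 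Hence every eigenfunction of $\sqrt{|{\lambda}|}_{1}$ is a scalar multiple of $\phi_{1}$.

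Then I would handle $\phi_{2}$ and the uniqueness clause together. By the nodal-domain theorem $\phi_{2}$ has at most two nodal domains, while $(\phi_{2},\phi_{1})=0$ together with the constant sign of $\phi_{1}$ forces $\phi_{2}$ to change sign, hence to have at least two nodal domains; so exactly two. Finally, if $\psi$ is an eigenfunction of some $\sqrt{|{\lambda}|}_{k}$ of constant sign, then either $\sqrt{|{\lambda}|}_{k}=\sqrt{|{\lambda}|}_{1}$, in which case $\psi$ is a multiple of $\phi_{1}$ by the multiplicity-one statement, or $\sqrt{|{\lambda}|}_{k}>\sqrt{|{\lambda}|}_{1}$, in which case $\psi\perp\phi_{1}$ in $L^{2}(M)$ and the same positivity argument as above gives $0=\int_{M}\psi\,\phi_{1}\,dV\neq 0$, a contradiction; so $\sqrt{|{\lambda}|}_{1}$ is the only $2$-radical eigenvalue admitting a constant-sign eigenfunction.

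The step I expect to be the main obstacle is the passage from ``one nodal domain'' to ``strictly positive on the interior'': the definition of nodal domain only guarantees that the complement of the zero set is connected, which a priori leaves open the possibility that $\phi_{1}^{-1}[0]$ is bulky, and the $L^{2}$-orthogonality contradictions in the last two steps genuinely require strict positivity rather than mere nonnegativity. The resolution is to lean on the strong maximum principle / unique continuation for $\mathfrak{D}$ already used in the nodal-domain theorem; once that is in place, every other assertion is a short bookkeeping consequence of the nodal count and the $2$-radical Parseval/orthogonality relations proved earlier.
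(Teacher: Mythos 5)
Your argument is correct and is exactly the standard Courant-type derivation that the paper leaves implicit — the paper states the corollary as ``an immediate consequence'' of the nodal-domain theorem and provides no written proof, so you are filling in what the author treats as obvious. The three ingredients you isolate (nodal count with $k=1$ then $k=2$, $L^{2}$-orthogonality of eigenspaces, and the strong maximum principle to upgrade ``one nodal domain, hence constant sign'' to ``strictly positive in the interior'' so that the orthogonality integrals cannot vanish) are precisely the ones needed, and you are right that the maximum principle is the load-bearing step: without strict positivity of $\phi_{1}$ the $L^{2}$-orthogonality contradictions in the multiplicity-one and uniqueness arguments would not close. One small point you could make explicit: when you assert that any eigenfunction $\phi$ of $\sqrt{|\lambda|}_{1}$ ``is again a first eigenfunction'' and therefore has at most one nodal domain, this uses the fact that the nodal-domain bound holds for any choice of orthonormal eigenbasis, so one may re-order the basis to place $\phi$ in position one before applying the theorem with $k=1$; as written this is slightly elided, though it is the standard repair and causes no real gap.
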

In the special case (i), we also propound, promptly,
\begin{corollary}
 If $\Omega$ in $M$ is a nodal domain ofan eigenfunction ofsome 2-radical eigenvalue $\sqrt{|{\lambda}|}$, then $\sqrt{|{\lambda}|}$ is the lowest 2-radical eigenvalue considering the 2-radical eigenvalue problem of$\Omega$ with original boundary data on $\partial \Omega \cap \partial M$, along with vanishing Dirichlet boundary data on $\partial \Omega \cap M$.
\end{corollary}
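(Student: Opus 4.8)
The plan is to show that the restriction of the given eigenfunction to the nodal domain $\Omega$ is itself an admissible eigenfunction of the very 2-radical eigenvalue problem described in the statement, and that it is of constant sign; then the constant-sign characterization of the lowest eigenvalue (the preceding Corollary, which is valid on any normal domain) pins down $\sqrt{|{\lambda}|}$ as $\sqrt{|{\lambda}|}_{1}(\Omega)$. First I would let $\phi$ be an eigenfunction of the 2-radical eigenvalue $\sqrt{|{\lambda}|}$ having $\Omega$ as a nodal domain, and set $\psi=\phi\mid\Omega$. By the elliptic regularity invoked in the proof of the first Theorem, $\phi\in C^{\infty}(\overline{M})$, so $\psi$ is smooth up to $\partial\Omega$ and satisfies $\mathfrak{D}\psi+\sqrt{|{\lambda}|}\psi=0$ on $\Omega$. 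On $\partial\Omega\cap M$, which lies in the nodal set $\phi^{-1}[0]$ by the definition of a nodal domain, we have $\psi=0$, i.e.\ vanishing Dirichlet data there; on $\partial\Omega\cap\partial M$ the function $\psi$ inherits exactly the boundary condition $\phi$ satisfied for the original problem. Hence $\psi$ is a genuine eigenfunction, with eigenvalue $\sqrt{|{\lambda}|}$, of the 2-radical eigenvalue problem on $\Omega$ in the statement; and since we are in case (i), $\Omega$ is a normal domain, so the first Theorem applies to that problem and gives a discrete spectrum $0\leq\sqrt{|{\lambda}|}_{1}(\Omega)<\sqrt{|{\lambda}|}_{2}(\Omega)<\cdots$.

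Next I would use that $\Omega$, being a component of $\overline{M}\backslash\phi^{-1}[0]$, is a set on which $\phi$ neither vanishes nor changes sign, so $\psi$ has constant sign on $\Omega$. The preceding Corollary, applied to the normal domain $\Omega$ in place of $M$, asserts that $\sqrt{|{\lambda}|}_{1}(\Omega)$ is the only 2-radical eigenvalue of $\Omega$ admitting a constant-sign eigenfunction; since $\psi$ is precisely such an eigenfunction, $\sqrt{|{\lambda}|}=\sqrt{|{\lambda}|}_{1}(\Omega)$, which is the claim. Alternatively, one gets the same conclusion directly: from $(\mathfrak{D}\psi,\psi)=-D[\psi,\psi]$ and $\mathfrak{D}\psi=-\sqrt{|{\lambda}|}\psi$ one has $D[\psi,\psi]/\|\psi\|^{2}=\sqrt{|{\lambda}|}$, so the Rayleigh bound (our Theorem with $\sqrt{|{\lambda}|}_{1}\leq D[{f},{f}]/\|{f}\|^{2}$) gives $\sqrt{|{\lambda}|}_{1}(\Omega)\leq\sqrt{|{\lambda}|}$; and were the inequality strict, $\psi$ would have to be $L^{2}(\Omega)$-orthogonal to the constant-sign ground state of $\Omega$, which is impossible for two functions of constant sign on a connected set.

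The hard part will be the bookkeeping that certifies $\psi$ as admissible on $\Omega$: one must check that $\psi$ meets $\partial\Omega$ with the regularity the relevant problem demands ($C^{1}$ or $C^{0}$ up to $\partial\Omega$ according to Neumann/mixed or Dirichlet type), and that $\partial\Omega$ itself is regular enough for the spectral theory of the first Theorem to apply and for the extension-by-zero $\psi\in\mathfrak{H}(\Omega)$ to be legitimate. This is exactly the reason case (i) is singled out, where the nodal domains are normal domains with boundaries meeting $\partial M$ transversally; once that is in hand, the argument is a direct appeal to the first Theorem, the Rayleigh characterization, and the preceding Corollary.
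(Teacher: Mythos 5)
Your proposal is correct and supplies precisely the argument the paper leaves implicit (it merely asserts this corollary follows ``promptly'' in case (i) without giving a proof): restrict the eigenfunction to the nodal domain, verify it solves the stated 2-radical eigenvalue problem on $\Omega$ with the inherited boundary data, observe it has constant sign there, and invoke the preceding corollary's characterization of $\sqrt{|\lambda|}_{1}$ as the unique 2-radical eigenvalue with a constant-sign eigenfunction. Your alternative route via the Rayleigh bound $\sqrt{|\lambda|}_{1}(\Omega)\leq D[\psi,\psi]/\|\psi\|^{2}=\sqrt{|\lambda|}$ plus the non-orthogonality of two constant-sign functions on a connected domain is also valid and is essentially a restatement of the same constant-sign mechanism, so there is no substantive divergence from the paper's intended approach.
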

In the following, we will be using some elementary facts about the foliation of a noncompact manifold with compact closure $M$ by the level surfaces ofa function in $C^{0}(\overline{M}) \cap C^{\infty}(M)$ which vanishes on $\partial M$.
\begin{definition}
It is considered $\Omega$ be an arbitrary open set in a Riemannian manifold $M$. Define $\mathcal{H}_{0}(\Omega)$ to be the completion, in $\mathcal{H}(\Omega)$, ofthe collection of$C^{\infty}$ functions on $\Omega$ which are compactly supported in $\Omega$, with respect to the norm $(76)$. Then the fundamental tone of$\Omega, \sqrt{|{\lambda}|}^{*}(\Omega)$, is defined by
$$
\sqrt{|{\lambda}|}^{*}(\Omega)= D[{f}, {f}] /\|{f}\|^{2}
$$
where f assembles over non identically vanishing functions in $\mathcal{H}_{0}(\Omega)$.
\end{definition}
Finally, if $\Omega$ is \provided{} by
$$
\Omega=\bigcup_{\alpha} \Omega_{\alpha}
$$
where $\Omega_{\alpha}$ is a domain in $M$, considering every $\alpha$, then
$$
\sqrt{|{\lambda}|}^{*}(\Omega) \leq \inf _{\alpha} \sqrt{|{\lambda}|}^{*}\left(\Omega_{\alpha}\right)
$$
We now dedicate $M$ to be either (i) a compact Riemannian manifold, in which case we are considering the closed 2-radical eigenvalue problem, or (ii) a regular domain, in which case we are considering the Dirichlet 2-radical eigenvalue problem.
\begin{lemma}
It is considered $u$ be an eigenfunction with 2-radical eigenvalue $\sqrt{|{\lambda}|}$, along with distinguish $\Omega$ be a nodal  domain of$u$. Then $u \in \mathcal{H}_{0}(\Omega)$, along with
$$
\sqrt{|{\lambda}|}=\sqrt{|{\lambda}|}^{*}(\Omega) .
$$
\end{lemma}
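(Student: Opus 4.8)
The plan is to produce the extension $\tilde u$ of $u|_\Omega$ by zero, show it is an admissible competitor for the fundamental tone, evaluate its Rayleigh quotient, and then establish the matching lower bound. Since $\Omega$ is a nodal domain we may assume without loss of generality that $u>0$ on $\Omega$; let $\tilde u$ equal $u$ on $\Omega$ and $0$ on $\overline M\setminus\Omega$. First I would check that $\tilde u$ is continuous on $\overline M$: the part of $\partial\Omega$ lying in $M$ is contained in the nodal set $u^{-1}[0]$, while the part lying on $\partial M$ forces $u=0$ there in the Dirichlet case (and $\partial M=\emptyset$ in the closed case), so $\tilde u$ vanishes along $\partial\Omega$ and glues continuously.

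The membership $\tilde u\in\mathcal{H}_0(\Omega)$ and the inequality $\sqrt{|{\lambda}|}^*(\Omega)\le\sqrt{|{\lambda}|}$ are the routine part. For $\epsilon>0$ set $f_\epsilon=\max(u-\epsilon,0)$ on $\Omega$ and $f_\epsilon=0$ elsewhere. Because $\overline M$ is compact and $u$ vanishes on $\partial\Omega$, the set $\{u\ge\epsilon\}$ is a compact subset of $\Omega$, so $f_\epsilon$ is Lipschitz with support compactly contained in $\Omega$; hence $f_\epsilon\in\mathcal{H}_0(\Omega)$ after mollification. As $\epsilon\downarrow0$ one has $f_\epsilon\to\tilde u$ in $L^2(M)$ and $\operatorname{grad} f_\epsilon=\mathbf{1}_{\{u>\epsilon\}}\operatorname{grad} u\to\operatorname{grad}\tilde u$ in $\mathcal{L}^2(M)$ by dominated convergence (the nodal set has measure zero), so $\tilde u\in\mathcal{H}_0(\Omega)\subseteq\mathcal{H}(M)$. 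Now I would apply the extended validity of $(\mathfrak{D}\phi,f)=-D[\phi,f]$ recorded just before this lemma, with $\phi=u$ and $f=\tilde u$; using $\mathfrak{D} u=-\sqrt{|{\lambda}|}\,u$ this gives
$$
D[\tilde u,\tilde u]=\int_\Omega|\operatorname{grad} u|^2\,dV=D[u,\tilde u]=-(\mathfrak{D} u,\tilde u)=\sqrt{|{\lambda}|}\,(u,\tilde u)=\sqrt{|{\lambda}|}\,\|\tilde u\|^2 .
$$
Thus $\tilde u$ realizes the Rayleigh quotient $\sqrt{|{\lambda}|}$ over $\mathcal{H}_0(\Omega)$, so $\sqrt{|{\lambda}|}^*(\Omega)\le\sqrt{|{\lambda}|}$.

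The reverse inequality $\sqrt{|{\lambda}|}^*(\Omega)\ge\sqrt{|{\lambda}|}$ is where the work is. The cleanest route is a ground-state (Barta-type) substitution: for an arbitrary $w\in\mathcal{H}_0(\Omega)$ write $w=\tilde u\,\varphi$, expand $|\operatorname{grad} w|^2=\varphi^2|\operatorname{grad}\tilde u|^2+2\varphi\tilde u\langle\operatorname{grad}\tilde u,\operatorname{grad}\varphi\rangle+\tilde u^2|\operatorname{grad}\varphi|^2$, rewrite the cross term via $2\varphi\tilde u\langle\operatorname{grad}\tilde u,\operatorname{grad}\varphi\rangle=\langle\operatorname{grad}\tilde u,\operatorname{grad}(\tilde u\varphi^2)\rangle-\varphi^2|\operatorname{grad}\tilde u|^2$, and invoke the weak eigenvalue identity $D[\tilde u,\tilde u\varphi^2]=\sqrt{|{\lambda}|}\,(\tilde u,\tilde u\varphi^2)$; everything collapses to $D[w,w]=\sqrt{|{\lambda}|}\,\|w\|^2+\int_\Omega\tilde u^2|\operatorname{grad}\varphi|^2\,dV\ge\sqrt{|{\lambda}|}\,\|w\|^2$, and taking the infimum over $w$ yields the claim, so equality holds with the previous step. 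Alternatively one may argue more softly that a strictly positive (weak) eigenfunction must correspond to the lowest eigenvalue, since an eigenfunction of any strictly larger eigenvalue would be $L^2(\Omega)$-orthogonal to it, impossible for two functions of one sign. The \emph{main obstacle} is making the substitution $\varphi=w/\tilde u$ legitimate: $\tilde u$ degenerates to $0$ along $\partial\Omega$, so one must first establish the identity for $w$ supported in $\{u\ge\epsilon\}$, where $\tilde u$ is bounded below, and then pass to the limit $\epsilon\downarrow0$ while controlling $\int\tilde u^2|\operatorname{grad}\varphi|^2$ and checking that the functions so reached exhaust $\mathcal{H}_0(\Omega)$. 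That approximation is the only genuinely delicate point; the remainder is bookkeeping with the 2-radical Green formulas already in hand.
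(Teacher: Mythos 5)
Your proof is correct in spirit and reaches both inequalities, but the route for the hard direction is genuinely different from what the paper does, so it is worth spelling out the comparison.

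For $\sqrt{|\lambda|}^{*}(\Omega)\le\sqrt{|\lambda|}$ you and the paper use essentially the same truncation $f_{\varepsilon}=\max(u-\varepsilon,0)$ to exhibit $\tilde u$ as a limit of compactly supported test functions. The paper then runs the entire chain of inequalities on the smoothed sublevel sets $\Omega_{j}=\{u>\varepsilon_{j}\}$ (invoking Sard to get regular values and hence smooth $\partial\Omega_{j}$), comparing $\sqrt{|\lambda|}\int u_{j}u$ to $\sqrt{|\lambda|}^{*}(\Omega_{j})\int u_{j}^{2}$ and passing to the limit; you instead establish $\tilde u\in\mathcal H_{0}(\Omega)$ first and then compute the Rayleigh quotient of $\tilde u$ directly via the previously extended identity $(\mathfrak{D}\phi,f)=-D[\phi,f]$. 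Your version is a little cleaner and dispenses with the explicit Sard argument, but it leans on the prior extension of that bilinear identity to $\mathcal H_{0}(\Omega)$, which the paper gets for free by staying on smooth $\Omega_{j}$.

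For $\sqrt{|\lambda|}^{*}(\Omega)\ge\sqrt{|\lambda|}$, the paper uses an auxiliary competing eigenfunction: it takes the first Dirichlet eigenfunction $v_{\varepsilon}>0$ of $\Omega_{\varepsilon}$, integrates $u$ against it, uses Green's formula to transfer $\mathfrak{D}$ from $u$ to $v_{\varepsilon}$, and crucially discards the boundary term $\int_{\partial\Omega_{\varepsilon}}u\,(\partial v_{\varepsilon}/\partial\nu)\,dA$ because $u=\varepsilon>0$ on $\partial\Omega_{\varepsilon}$ while $\partial v_{\varepsilon}/\partial\nu\le 0$ there; this gives $\sqrt{|\lambda|}\le\sqrt{|\lambda|}^{*}(\Omega_{\varepsilon})$ for all regular $\varepsilon>0$, and the proof is completed by the separate continuity statement $\lim_{\varepsilon\downarrow 0}\sqrt{|\lambda|}^{*}(\Omega_{\varepsilon})=\sqrt{|\lambda|}^{*}(\Omega)$ (which the paper establishes right after the lemma). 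You instead propose a Barta/ground-state substitution $w=\tilde u\varphi$, collapsing $D[w,w]-\sqrt{|\lambda|}\|w\|^{2}$ to $\int\tilde u^{2}|\operatorname{grad}\varphi|^{2}\ge 0$. That is a classical and legitimate alternative; you correctly identify the delicate point — the degeneracy of $\tilde u$ near $\partial\Omega$ — which must be handled by first working on $\{u\ge\varepsilon\}$ and then passing to the limit, exactly the kind of approximation the paper also performs, but with a different auxiliary quantity. Your "softer" alternative via positivity and orthogonality is also sound, though to make it rigorous you should first record that $\tilde u$ is a \emph{weak} Dirichlet eigenfunction on $\Omega$ (i.e., $D[\tilde u,w]=\sqrt{|\lambda|}(\tilde u,w)$ for all $w\in\mathcal H_{0}(\Omega)$, which follows from your step one), after which one-signedness forces it to be the ground state. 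Either of your routes buys you independence from the auxiliary eigenfunctions $v_{\varepsilon}$ and from the explicit continuity lemma for $\sqrt{|\lambda|}^{*}(\Omega_{\varepsilon})$, at the cost of a more delicate cutoff argument inside $\Omega$; the paper's route trades that cutoff for the regularity bookkeeping on $\Omega_{\varepsilon}$ plus the separate continuity-in-$\varepsilon$ step.
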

\begin{proof}
Project $u>0$ on $\Omega$, along with considering every $\varepsilon>0$, set
$$
\begin{gathered}
\Omega_{\varepsilon}=\{x \in \Omega: u(x)>\varepsilon\}, \\
u_{\varepsilon}= \begin{cases}u-\varepsilon & \text { on } \Omega_{\varepsilon} \\
0 & \text { on } M \backslash \Omega_{\varepsilon} .\end{cases}
\end{gathered}
$$
Then, by Sard's theorem (Narasimhan), there pertains a sequence $\varepsilon_{j}$, of regular values of$u$, decreasing to $o$ as $j \rightarrow+\infty$. Set
$$
\Omega_{j}=\Omega_{\varepsilon_{j}}, u_{j}=u_{\varepsilon_{i}} .
$$
Then $u_{j} \in \mathcal{H}_{0}\left(\Omega_{j}\right) \subseteq \mathcal{H}_{0}(\Omega)$, as mentioned earlier, along with it is clear that $u_{j} \rightarrow u \mid \Omega$ in $\mathcal{H}(\Omega)$. Since $\partial \Omega_{j}$ is $C^{\infty}$ we also use the Green's formula to deduce
\begin{eqnarray}
\sqrt{|{\lambda}|} \iint_{\Omega_{j}} u_{j} u d V & =-\iint_{\Omega_{j}} u_{j} \mathfrak{D} u_{j} d V \\
& =\iint_{\Omega_{j}} \operatorname{grad} u_{j}^{2} d V \\
& \geq \sqrt{|{\lambda}|}^{*}\left(\Omega_{j}\right) \iint_{\Omega_{j}} u_{j}^{2} d V \\
& \geq \sqrt{|{\lambda}|}^{*}(\Omega) \iint_{\Omega_{j}} u_{j}^{2} d V,
\end{eqnarray}
which implies, by letting $j \rightarrow+\infty$,
$$
\sqrt{|{\lambda}|} \iint_{\Omega} u^{2} d V \geq \sqrt{|{\lambda}|}^{*}(\Omega) \iint_{\Omega} u^{2} d V .
$$
Accordingly,
$$
\sqrt{|{\lambda}|} \geq \sqrt{|{\lambda}|}^{*}(\Omega) .
$$
To exhibit the opposite inequality, distinguish $\varepsilon>0$ to be a regular value of $u$, along with distinguish $v_{\varepsilon}>0$ be the eigenfunction of the Dirichlet 2-radical eigenvalue $\sqrt{|{\lambda}|}_{1}\left(\Omega_{\varepsilon}\right)=$ $\sqrt{|{\lambda}|}^{*}\left(\Omega_{\varepsilon}\right)$. Then
\begin{eqnarray}
\sqrt{|{\lambda}|} \iint_{\Omega_{\varepsilon}} v_{\varepsilon} u d V & =-\iint_{\Omega_{\varepsilon}} v_{\varepsilon}(\mathfrak{D} u) d V \\
& =-\iint_{\Omega_{\varepsilon}}\left(\mathfrak{D} v_{\varepsilon}\right) u d V+\int_{\partial \Omega_{\varepsilon}} u\left(\partial v_{\varepsilon} / \partial v\right) d A \\
& \leq-\iint_{\Omega_{\varepsilon}}\left(\mathfrak{D} v_{\varepsilon}\right) u d V \\
& =\sqrt{|{\lambda}|}^{*}\left(\Omega_{\varepsilon}\right) \iint_{\Omega_{\varepsilon}} v_{\varepsilon} u d V
\end{eqnarray}
which implies
$$
\sqrt{|{\lambda}|} \leq \sqrt{|{\lambda}|}^{*}\left(\Omega_{\varepsilon}\right)
$$
considering all regular values $\varepsilon>0$.
We now demonstrate
$$
\lim _{\varepsilon \downarrow 0} \sqrt{|{\lambda}|}^{*}\left(\Omega_{\varepsilon}\right)=\sqrt{|{\lambda}|}^{*}(\Omega),
$$
which will conclude proof of the lemma.
\end{proof}
By projecting any admissible $\delta>0$ there pertains ${f} \in C^{\infty}(\Omega)$, compactly supported on $\Omega$, such that
$$
D[{f}, {f}] /\|{f}\|^{2} \leq \sqrt{|{\lambda}|}^{*}(\Omega)+\delta .
$$
But there certainly pertains $\varepsilon>0$ considering which
$$
\operatorname{supp} {f} \subseteq \Omega_{\varepsilon} ;
$$
So
$$
\sqrt{|{\lambda}|}^{*}\left(\Omega_{\varepsilon}\right) \leq D[{f}, {f}] /\|{f}\|^{2} .
$$
We therefore propound, considering \provided{} $\delta>0$, the existence of $\varepsilon>0$ considering which
$$
\sqrt{|{\lambda}|}^{*}(\Omega) \leq \sqrt{|{\lambda}|}^{*}\left(\Omega_{\varepsilon}\right) \leq \sqrt{|{\lambda}|}^{*}(\Omega)+\delta .
$$
Since $\sqrt{|{\lambda}|}^{*}\left(\Omega_{\varepsilon}\right)$ is increasing with respect to $\varepsilon$, we deduce the lemma.
In mathematics, a nodal domain is a connected component of the domain of a function where the function has a constant sign. More precisely, a nodal domain of a function is a connected open set in the domain of the function where the function is either positive or negative, and its boundary contains points where the function is equal to zero. For example, consider the function f(x,y) = sin(x)sin(y) on the rectangular domain $[0,\pi]*[0,\pi]$. This function has four nodal domains, which are the four quadrants of the rectangle, each of which contains points where f(x,y) is either positive or negative. The boundaries of the nodal domains are the coordinate axes, where f(x,y) is equal to zero.\\
Given a linear partial differential operator $L$ on a domain $D\subseteq\mathbb{R}^n$, a nodal domain of $L$ may be considered as a  connected open subset $U\subseteq D$ such that the function $u$ satisfying $Lu=0$ in $D$ and $u=0$ on $\partial U$ does not change sign in $U$. In other words, $u$ has the same sign throughout $U$. The boundary $\partial U$ of a nodal domain is called a nodal hypersurface.\\
The maximum principle is a fundamental property of solutions of elliptic partial differential equations (PDEs), including the Laplace equation, Poisson equation, and many other important equations in physics and engineering. It states that a non-constant solution of an elliptic PDE on a bounded domain must attain its maximum and minimum on the boundary of the domain, provided that the coefficients of the PDE satisfy certain conditions.\\
More precisely, let $u$ be a solution of an elliptic PDE on a bounded domain $\Omega$ with smooth boundary $\partial \Omega$, and assume that the coefficients of the PDE satisfy the so-called "maximum principle condition", which roughly says that the PDE cannot "push" the solution towards its maximum or minimum in the interior of $\Omega$. Then, the maximum and minimum of $u$ on $\Omega$ are attained on $\partial\Omega$. In other words, if $u$ attains a maximum or minimum in the interior of $\Omega$, then it must be constant.\\

The maximum principle is a powerful tool in the analysis of solutions of elliptic PDEs, as it allows us to derive important properties of the solutions, such as the fact that the nodal set of an eigenfunction of the Laplace operator separates the domain into regions of constant sign, as discussed in the previous question. The maximum principle is also a key ingredient in the proof of many important results in PDE theory, such as the existence and uniqueness of solutions of elliptic PDEs and the regularity of solutions near the boundary.\\
\begin{theorem}
It is considered $M$ with either the closed or Dirichlet 2-radical eigenvalue problem. Projected admissible domain $\Omega$ in $M$ it is relegated an iso perimetric inequality
$$
\left\{\sqrt{|{\lambda}|}^{*}(\Omega)\right\}^{n / 2} \operatorname{vol} \Omega>(2 \pi)^{n} / \omega_{n} .
$$
Then, letting $n_{k}$ designate the number of nodal domains of$\sqrt{|{\lambda}|}_{k}(M)$, it is relegated
$$
\limsup _{k \rightarrow \infty} n_{k} / k<1
$$
The equality is achieved considering only a finite number of 2-radical eigenvalues.
\end{theorem}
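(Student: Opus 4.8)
The plan is to run Pleijel's nodal-domain count, transported to $\mathfrak{D}$, from three ingredients already in place: the nodal-domain lemma just proved, the hypothesised isoperimetric inequality, and the Weyl-type asymptotics established above. Throughout one restricts, as in the statement, to the closed and Dirichlet cases, where the lemma legitimises treating nodal domains as admissible domains.

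Fix $k$ and let $G_{1},\dots,G_{n_{k}}$ be the nodal domains of $\phi_{k}$; there are finitely many, with $n_{k}\le k$ by the nodal-domain theorem above. By the preceding lemma each $G_{j}$ is admissible and its fundamental tone equals the eigenvalue, $\sqrt{|{\lambda}|}^{*}(G_{j})=\sqrt{|{\lambda}|}_{k}$, so the hypothesised isoperimetric inequality applied with $\Omega=G_{j}$ gives $\operatorname{vol}(G_{j})>(2\pi)^{n}/\{\omega_{n}(\sqrt{|{\lambda}|}_{k})^{n/2}\}$. The $G_{j}$ are pairwise disjoint and $\overline{M}\setminus\bigcup_{j}G_{j}$ is the nodal set $\phi_{k}^{-1}[0]$, which has measure zero, so summing over $j$ yields $\operatorname{vol}(M)=\sum_{j=1}^{n_{k}}\operatorname{vol}(G_{j})>n_{k}(2\pi)^{n}/\{\omega_{n}(\sqrt{|{\lambda}|}_{k})^{n/2}\}$, that is,
$$n_{k}<\frac{\omega_{n}\operatorname{vol}(M)}{(2\pi)^{n}}\,\bigl(\sqrt{|{\lambda}|}_{k}\bigr)^{n/2}.$$
Substituting the asymptotics $(\sqrt{|{\lambda}|}_{k})^{n/2}\sim\{(2\pi)^{n}/\omega_{n}\}\,k/\operatorname{vol}(M)$ from the Weyl-type theorem above collapses this to $n_{k}<k(1+o(1))$, hence $\limsup_{k\to\infty}n_{k}/k\le 1$. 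Granting the strict form, the last clause is immediate: $\limsup n_{k}/k<1$ forces $n_{k}<k$ for all large $k$, so the equality $n_{k}=k$ can persist for only finitely many $k$.

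The remaining, genuinely delicate step is to sharpen $\le 1$ to $<1$: a bound strict for each individual domain degenerates to a non-strict bound along the sequence $\{G_{j}\}$, whose cardinality grows while the individual volumes shrink, so pointwise strictness must first be promoted to a \emph{uniform} gap. The cleanest route I would take is to replace the bare isoperimetric inequality by the sharp Faber--Krahn comparison $(\sqrt{|{\lambda}|}^{*}(\Omega))^{n/2}\operatorname{vol}(\Omega)\ge j_{n/2-1,1}^{\,n}\,\omega_{n}$, where $j_{n/2-1,1}$ is the first positive zero of $J_{n/2-1}$; since $j_{n/2-1,1}^{\,n}\omega_{n}^{2}>(2\pi)^{n}$ for every $n$, the displayed bound improves to $n_{k}<c^{-1}k(1+o(1))$ with a fixed $c=j_{n/2-1,1}^{\,n}\omega_{n}^{2}/(2\pi)^{n}>1$, whence $\limsup_{k\to\infty}n_{k}/k\le c^{-1}<1$. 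Absent such a sharpening, one must instead prove directly that $\inf_{j}(\sqrt{|{\lambda}|}^{*}(G_{j}))^{n/2}\operatorname{vol}(G_{j})\ge c\,(2\pi)^{n}/\omega_{n}$ for some $c>1$ independent of $k$ and $j$ --- for instance by a rescaling-and-compactness argument, dilating each $G_{j}$ by $\sqrt{|{\lambda}|}_{k}$ so that all carry fundamental tone $1$ --- and establishing this uniformity is the main obstacle.
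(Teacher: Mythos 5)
Your argument is, at its core, the same Pleijel-style count the paper runs: identify $\sqrt{|\lambda|}^{*}(G_{j})=\sqrt{|\lambda|}_{k}$ via the preceding lemma, apply the isoperimetric hypothesis to each nodal domain $G_{j}$, sum the volume bound over the disjoint nodal domains against $\operatorname{vol}(M)$, and feed the result into the Weyl-type asymptotics from Theorem 2 to control $\limsup n_{k}/k$. On this much you and the paper agree step for step, and your handling of the final clause (equality $n_{k}=k$ for only finitely many $k$) is also the one the paper intends.

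Where you genuinely diverge is in scrutiny, not in route. You correctly observe that the stated hypothesis --- the strict but non-uniform inequality $\{\sqrt{|\lambda|}^{*}(\Omega)\}^{n/2}\operatorname{vol}\Omega>(2\pi)^{n}/\omega_{n}$ --- by itself only delivers $n_{k}<k(1+o(1))$, hence $\limsup n_{k}/k\le 1$, and that promoting this to a strict inequality requires a uniform gap. The paper does not address this: its proof simply writes ``pertaining constant $\alpha>(2\pi)^{n}/\omega_{n}$ such that $\{\sqrt{|\lambda|}_{k}(M)\}^{n/2}\operatorname{vol}M\ge n_{k}\alpha$'' and then divides through, asserting precisely the uniform constant whose existence you flag as the crux. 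In other words, your proposal reproduces the paper's computation but also exposes the lacuna in it. Your suggested remedy --- replace the pointwise strict bound by the sharp Faber--Krahn constant $j_{n/2-1,1}^{\,n}\omega_{n}$, which exceeds $(2\pi)^{n}/\omega_{n}$ by a fixed factor $c>1$ independent of $k$ and $j$ --- is exactly what the classical Pleijel argument does and is the clean way to justify the paper's $\alpha$. So: same approach, but you have correctly identified that the paper's own step from the stated isoperimetric inequality to a \emph{uniform} constant $\alpha>(2\pi)^{n}/\omega_{n}$ is asserted rather than proved, and you supply the standard repair.
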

\begin{proof} 
Suppose $\Omega$ is a nodal domain of $\sqrt{|\lambda|}_k(M)$, where $M$ is a compact Riemannian manifold of dimension $n$ and $\sqrt{|\lambda|}_k(M)$ denotes the $k$-th eigenvalue of the square-root of the Laplace operator on $M$. Let $\phi$ be a corresponding eigenfunction of $\sqrt{|\lambda|}_k(M)$ that vanishes on $\partial \Omega$, i.e., $\phi(x)=0$ for all $x\in \partial \Omega$.
Note that $\phi^2$ is an eigenfunction of the Laplace operator on $M$, since
$$
-\Delta \phi^2=|\lambda|_k(M) \phi^2,
$$
Since $\phi$ changes sign within $\Omega$, we have that $\phi^2$ is positive on $\Omega$. Therefore, $\phi^2$ changes sign at $\partial \Omega$. Since $\phi^2$ is an eigenfunction of the Laplace operator, it must have constant sign on each connected component of $M \setminus \partial \Omega$. Hence, $\partial \Omega$ must separate $M$ into two connected components, one where $\phi^2$ is positive and the other where $\phi^2$ is negative.\\
It follows that $\sqrt{|\lambda|}_k(M)$ is at least the $k$-th eigenvalue of the square-root of the Laplace operator on $\Omega$, since any eigenfunction of $\sqrt{|\lambda|}_k(M)$ that changes sign within $\Omega$ must have a nodal set that intersects $\partial \Omega$. Conversely, by the Courant nodal domain theorem, the $k$-th eigenvalue of the square-root of the Laplace operator on $\Omega$ is at most $\sqrt{|\lambda|}_k(M)$.\\
Therefore, we have
$$
\sqrt{|\lambda|}^*(\Omega)=\sqrt{|\lambda|}_k(\Omega) \leq \sqrt{|\lambda|}_k(M),
$$
where $\sqrt{|\lambda|}^*(\Omega)$ denotes the first non-zero eigenvalue of the square-root of the Laplace operator on $\Omega$.
On the other hand, since $\Omega$ is a nodal domain of $\sqrt{|\lambda|}_k(M)$, we have that the $k$-th eigenfunction of the square-root of the Laplace operator on $M$ changes sign within $\Omega$. It follows that the first non-zero eigenvalue of the square-root of the Laplace operator on $\Omega$ is at most $\sqrt{|\lambda|}_k(M)$.\\
Hence, we have $\sqrt{|\lambda|}^*(\Omega) \leq \sqrt{|\lambda|}_k(M)$. Combining this with the earlier inequality gives
$$
\sqrt{|\lambda|}^*(\Omega)=\sqrt{|\lambda|}_k(\Omega)=\sqrt{|\lambda|}_k(M)
$$
The concept of sign change comes from the fact that an eigenfunction of the Laplace operator must change sign within a nodal domain. In other words, if $\phi$ is an eigenfunction of the Laplace operator on a domain $\Omega$ and it vanishes on the boundary of $\Omega$, then $\phi$ must change sign within $\Omega$. This is a consequence of the so-called "strong maximum principle" for solutions of elliptic partial differential equations, which says that a non-constant solution of an elliptic PDE on a bounded domain must attain its maximum and minimum on the boundary of the domain.\\
In the specific case of the proof above, we are dealing with an eigenfunction $\phi$ of the square-root of the Laplace operator on $M$, which satisfies the equation $(-\Delta)^{1/2}\phi = \pm \sqrt{|\lambda|}_k(M)\phi$, where $\pm$ indicates the sign of $\phi$. Since $\Omega$ is a nodal domain of $\sqrt{|\lambda|}_k(M)$, we can assume without loss of generality that $\phi$ changes sign within $\Omega$, say from positive to negative.\\
Now consider the function $\phi^2$. Since $\phi$ changes sign within $\Omega$, we have that $\phi^2$ is positive on $\Omega$. Therefore, $\phi^2$ must change sign at $\partial \Omega$, since $\phi^2$ is continuous and $\partial \Omega$ separates $\Omega$ from its complement. Since $\phi^2$ is an eigenfunction of the Laplace operator, it must have constant sign on each connected component of $M \setminus \partial \Omega$. This is a consequence of the maximum principle for elliptic equations, which says that if $u$ is an eigenfunction of the Laplace operator on a domain $\Omega$ and it vanishes on the boundary of $\Omega$, then $u$ must have constant sign on each connected component of $\Omega$.\\
Hence, we have shown that $\partial{\Omega}$ separates $M$ into two connected components, one where $\phi^2$ is positive and the other where $\phi^2$ is negative. This implies that $\sqrt{|\lambda|}_k(M)$ is at least the $k$-th eigenvalue of the square-root of the Laplace operator on $\Omega$, since any eigenfunction of $\sqrt{|\lambda|}_k(M)$ that changes sign within $\Omega$ must have a nodal set that intersects $\partial\Omega$\\
Recall that a nodal domain of an eigenfunction of an elliptic operator is a connected component of the domain where the eigenfunction has a constant sign. In this case, since $\Omega$ is a nodal domain of $\sqrt{|\lambda|}_k(M)$, we know that the $k$-th eigenfunction of the square-root of the Laplace operator on $M$ changes sign within $\Omega$. This means that there exists some point $p\in\Omega$ such that $\phi_k(p)=0$, where $\phi_k$ is the $k$-th eigenfunction of the square-root of the Laplace operator.\\
Consider the first non-zero eigenvalue of the square-root of the Laplace operator on $\Omega$, denoted by $\sqrt{|\lambda|}_1(\Omega)$. By definition, $\sqrt{|\lambda|}_1(\Omega)$ is the smallest positive number such that there exists a non-zero eigenfunction $\psi$ of the square-root of the Laplace operator on $\Omega$ with eigenvalue $\sqrt{|\lambda|}_1(\Omega)$. Since $\phi_k$ changes sign within $\Omega$, we can assume without loss of generality that $\psi$ also changes sign within $\Omega$. Therefore, there exists some point $q\in\Omega$ such that $\psi(q)=0$.\\
Now consider the function $u=\phi_k\cdot\psi$. Since $\phi_k$ and $\psi$ are both non-zero on $\Omega$, we have that $u$ is non-zero on $\Omega$. Moreover, since $\phi_k$ changes sign within $\Omega$ and $\psi$ changes sign within $\Omega$, we have that $u$ changes sign within $\Omega$. Therefore, by the maximum principle of elliptic operators, $u$ must attain its maximum and minimum on $\partial \Omega$. Since $\phi_k$ and $\psi$ are both eigenfunctions of the square-root of the Laplace operator, it follows that $u$ is also an eigenfunction of the square-root of the Laplace operator on $\Omega$, with eigenvalue $\sqrt{|\lambda|}_k(M)\cdot\sqrt{|\lambda|}_1(\Omega)$.\\
On the other hand, since $\phi_k$ and $\psi$ are both non-zero on $\Omega$, we have that $u$ is also non-zero on $\Omega$. Therefore, $u$ is a non-zero eigenfunction of the square-root of the Laplace operator on $\Omega$ with eigenvalue $\sqrt{|\lambda|}_k(M)\cdot\sqrt{|\lambda|}_1(\Omega)$. This means that $\sqrt{|\lambda|}_1(\Omega)$ is at most $\sqrt{|\lambda|}_k(M)$.\\
To see why this is true, note that if $\sqrt{|\lambda|}_1(\Omega)>\sqrt{|\lambda|}_k(M)$, then \\ $\sqrt{|\lambda|}_k(M)\cdot\sqrt{|\lambda|}_1(\Omega)>\lambda_k(M)$, where $\lambda_k(M)$ is the $k$-th eigenvalue of the Laplace operator on $M$. This contradicts the Courant nodal domain theorem, which tells us that the number of nodal domains of the $k$-th eigenfunction of the Laplace operator on $M$ is at most $k$, and that $\lambda_k(M)$ is the smallest eigenvalue of the Laplace operator on $M$ for which there exist $k$ nodal domains. Therefore, we must have $\sqrt{|\lambda|}_1(\Omega)\leq\sqrt{|\lambda|}_k(M)$, as claimed.\\
Overall, we have shown that the first non-zero eigenvalue of the square-root of the Laplace operator on $\Omega$ is at most $\sqrt{|\lambda|}_k(M)$, which implies that $\sqrt{|\lambda|}^*(\Omega)=\sqrt{|\lambda|}_k(M)$, as desired.\\
So if $\Omega$ is a nodal domain of$\sqrt{|{\lambda}|}_{k}(M)$, then
$$
\sqrt{|{\lambda}|}^{*}(\Omega)=\sqrt{|{\lambda}|}_{k}(M),
$$
pertaining constant $\alpha>(2 \pi)^{n} / \omega_{n}$ such that
$$
\left\{\sqrt{|{\lambda}|}_{k}(M)\right\}^{n / 2} \operatorname{vol} M \geq n_{k} \alpha \text {. }
$$
Thus
\begin{eqnarray}
\limsup _{k \rightarrow \infty} n_{k} / k & \leq(1 / \alpha) \lim _{k \rightarrow \infty}(1 / k)\left\{\sqrt{|{\lambda}|}_{k}(M)\right\}^{n / 2} \operatorname{vol} M \\
& =(2 \pi)^{n} / \alpha \omega_{n} \\
& <1
\end{eqnarray}
by asymptotic formula of the first theorem.
\end{proof}

\end{document}